\definecolor{blue}{rgb}{0,0,1}
\definecolor{red}{rgb}{1,0,0}
\definecolor{green}{rgb}{0,.6,.2}
\definecolor{purple}{rgb}{1,0,1}
\long\def\red#1\endred{\textcolor{red}{#1}}
\long\def\blue#1\endblue{\textcolor{blue}{#1}}
\long\def\purple#1\endpurple{\textcolor{purple}{ #1}}
\long\def\green#1\endgreen{\textcolor{green}{#1}}
\newcommand{\bsl}{{\backslash}}
\newcommand{\Z}{\mathbb{Z}}
\newcommand{\Q}{\mathbb{Q}}
\newcommand{\R}{\mathbb{R}}
\newcommand{\T}{\mathbb{T}}
\newcommand{\C}{\mathbb{C}}
\newcommand{\sm}{\left(\begin{smallmatrix}}
\newcommand{\esm}{\end{smallmatrix}\right)}
\newcommand{\bpm}{\begin{pmatrix}}
\newcommand{\epm}{\end{pmatrix}}
\renewcommand{\epsilon}{\varepsilon}
\DeclareMathOperator{\SL}{SL}
\DeclareMathOperator{\M}{M}
\DeclareMathOperator{\ASL}{ASL}
\DeclareMathOperator{\Cb}{C}
\DeclareMathOperator{\Hb}{H}
\newtheorem{theorem}{Theorem}[section]
\newtheorem{lemma}{Lemma}[section]
\newtheorem{proposition}{Proposition}[section]
\newtheorem{corollary}{Corollary}[section]
\newtheorem{definition}{Definition}[section]
\theoremstyle{remark}
\newtheorem{remark}{Remark}[section]
\numberwithin{theorem}{section}
\numberwithin{equation}{section}
\def\veca{{\text{\boldmath$a$}}}
\def\vecb{{\text{\boldmath$b$}}}
\def\vecc{{\text{\boldmath$c$}}}
\def\vece{{\text{\boldmath$e$}}}
\def\veck{{\text{\boldmath$k$}}}
\def\vecl{{\text{\boldmath$l$}}}
\def\vecm{{\text{\boldmath$m$}}}
\def\vecn{{\text{\boldmath$n$}}}
\def\vecr{{\text{\boldmath$r$}}}
\def\vecs{{\text{\boldmath$s$}}}
\def\vect{{\text{\boldmath$t$}}}
\def\vecv{{\text{\boldmath$v$}}}
\def\vecx{{\text{\boldmath$x$}}}
\def\vecy{{\text{\boldmath$y$}}}
\def\vecnull{{\text{\boldmath$0$}}}
\def\i{\mathrm{i}}
\def\scrA{{\mathcal A}}
\def\scrB{{\mathcal B}}
\def\scrE{{\mathcal E}}
\def\cR{{\mathcal R}}
\def\scrS{{\mathcal S}}
\def\fg{{\mathfrak g}}
\def\fP{{\mathfrak P}}
\def\i{\mathrm{i}}
\def\diag{\operatorname{diag}}
\def\diam{\operatorname{diam}}
\def\C{\operatorname{C{}}}
\def\G{\operatorname{G{}}}
\def\M{\operatorname{M{}}}
\def\GL{\operatorname{GL}}
\def\SL{\operatorname{SL}}
\def\ASL{\operatorname{ASL}}
\def\Prob{\operatorname{Prob}}
\def\trans{\,^\mathrm{t}\!}
\def\@tocline#1#2#3#4#5#6#7{\relax
  \ifnum #1>\c@tocdepth 
  \else
    \par \addpenalty\@secpenalty\addvspace{#2}%
    \begingroup \hyphenpenalty\@M
    \@ifempty{#4}{%
      \@tempdima\csname r@tocindent\number#1\endcsname\relax
    }{%
      \@tempdima#4\relax
    }%
    \parindent\z@ \leftskip#3\relax \advance\leftskip\@tempdima\relax
    \rightskip\@pnumwidth plus4em \parfillskip-\@pnumwidth
    #5\leavevmode\hskip-\@tempdima
      \ifcase #1
       \or\or \hskip 1em \or \hskip 2em \else \hskip 3em \fi%
      #6\nobreak\relax
    \hfill\hbox to\@pnumwidth{\@tocpagenum{#7}}\par
    \nobreak
    \endgroup
  \fi}
\title{Effective joint equidistribution of primitive rational points on expanding horospheres}
\date{\today}
\author{Daniel El-Baz, Bingrong Huang, and Min Lee}
\address{Institute of Analysis and Number Theory \\ Graz University of Technology \\ Steyrergasse 30, 8010 Graz \\ Austria}
\email{danielelbaz88@gmail.com}
\address{Data Science Institute and School of Mathematics  \\ Shandong University \\ Jinan \\ Shandong 250100 \\China}
\email{brhuang@sdu.edu.cn}
\address{School of Mathematics, University of Bristol, Bristol BS8 1TW, U.K.}
\email{\tt min.lee@bristol.ac.uk}
\begin{document}

\begin{abstract}
We prove an effective version of a result due to Einsiedler, Mozes, Shah and Shapira who established the equidistribution of primitive rational points on  expanding horospheres in the space of unimodular lattices in at least $3$ dimensions. Their proof uses techniques from homogeneous dynamics and relies in particular on measure-classification theorems --- an approach which does not lend itself to effective bounds. We implement a strategy based on spectral theory, Fourier analysis and Weil's bound for Kloosterman sums in order to quantify the rate of equidistribution for a specific horospherical subgroup in any dimension.
We apply our result to provide a rate of convergence to the limiting distribution for the appropriately rescaled diameters of random circulant graphs.
\end{abstract}

\maketitle
\tableofcontents

\section{Introduction}
In recent years, there has been an increased focus on obtaining effective versions of equidistribution theorems in homogeneous dynamics.
For the method it introduced, we single out Str\"ombergsson's breakthrough paper \cite{Stromb2015} and mention the related work by Browning and Vinogradov \cite{BV2016}.
Particularly interesting targets, of which these two papers are instances, consist of results whose proof relies on rigidity theorems such as Ratner's, which are by nature not effective.
The primary purpose of this paper is to accomplish this to get an effective version of a result due to Einsiedler, Mozes, Shah and Shapira \cite{EMSS2016}.
Their theorem was a conjecture due to Marklof, who had been able to prove an averaged version thereof and made great use of it \cite{Marklof2010}.
His proof relied on the mixing property of a certain diagonal flow on the space of unimodular lattices and was made effective, using estimates on the decay of matrix coefficients, by Li \cite{Li2015} who applied it to obtain a quantitative version of Marklof's result concerning the distribution of Frobenius numbers.
An article by Marklof and the third author \cite{LeeMarklof2017} provided a rate of convergence for the Einsiedler--Mozes--Shah--Shapira result for a certain horospherical subgroup in the two-dimensional setting according to the set-up below.
We now state our main result, which yields such a rate in any dimension for certain horospherical subgroups.


For $d\ge 1$, let $\Gamma = \SL_{d+1}(\Z)$ and define
\begin{equation}\Hb = \left\{ \bpm A & \vecv \\  \trans \vecnull & 1 \epm \, : \,  A \in \SL_d(\R), \vecv \in \R^d \right \} \subset \SL_{d+1}(\R).
\end{equation}
Denote by $\mu_{\Hb}$ the ${\Hb}$-invariant Haar probability measure on $\Gamma \backslash \Gamma {\Hb}$.
Finally, for $\vecx \in \R^d$, define
\begin{equation}
n_+(\vecx) = \bpm I_d & \vecnull \\ \trans \vecx & 1 \epm \in \SL_{d+1}(\R).
\end{equation}
We note that the group of all matrices of this form is the expanding horospherical subgroup corresponding to the semigroup of matrices of the form $\diag(e^t, \ldots, e^t, e^{-dt}) \in \SL_{d+1}(\R)$ with $t > 0$.

Define, for every positive integer $q$,
\begin{equation} D(q) = \diag(q^{\frac 1d}, \ldots, q^{\frac 1d}, q^{-1}) \in \SL_{d+1}(\R). \end{equation}
Note that for every $\vecr\in \Z^d$ satisfying $\gcd(q, \vecr)=1$, we have by \cite[Lemma 2.1]{EMSS2016} (see also \cite[Remark 3.3, (3.53)]{Marklof2010} and \cite[Lemma 4.1]{Li2015})
\begin{equation}\label{e:emss_lem}
\Gamma n_+(q^{-1} \vecr) D(q) \in \Gamma \bsl \Gamma {\Hb}.
\end{equation}
We also provide an explicit proof in \autoref{lem:emss_parameter}.
We define
\begin{equation}
\cR_q = \{ \vecr \in (\Z \cap [1,q])^d \, : \, \gcd(\vecr, q) = 1 \}.
\end{equation}

Let $\T\cong \Z\bsl \R$ be the unit circle and
$\Cb_b^k(\Gamma\backslash \Gamma {\Hb}\times \T^d)$ be the space of $k$ times continuously differentiable functions with all derivatives bounded and denote by $\|\cdot\|_{\Cb_b^k}$ the Sobolev norm (see \eqref{e:fCbk}).
Our main result is the following theorem.
\begin{theorem} \label{thm:main}
For every $d \ge 3$, every $\epsilon>0$ and every integer  $k \geq 2d^2-d+1$, there exists a constant $c > 0$ such that for every function $f \in \Cb_b^{k}(\Gamma \backslash \Gamma {\Hb} \times \T^d)$ and every $q \in \Z_{\ge 1}$,
\begin{equation}
  \left| \frac 1{\#\cR_q} \sum_{\vecr \in \cR_q} f \left(\Gamma n_+\left(\frac 1q \vecr \right) D(q), \frac 1q \vecr \right) - \int_{\Gamma \backslash \Gamma {\Hb} \times \T^d} f d \mu_{\Hb} d \vecx \right|
  \le c \| f \|_{\Cb_b^{k}} q^{-\frac 12 + \frac{d^2(2k-2d+1)}{2k^2} +\epsilon}.
\end{equation}
\end{theorem}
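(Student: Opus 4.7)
The plan is to combine a Fourier decomposition in the torus variable $\vecx\in\T^d$ with a further Fourier and spectral analysis on $\Gamma\bs\Gamma H\cong \ASL_d(\Z)\bs\ASL_d(\R)$, ultimately reducing the required cancellation to Weil-type bounds for Kloosterman sums in $d$ variables. I begin by writing
\begin{equation*}
f(g, \vecx) = \sum_{\vecm\in\Z^d} \widehat f(g, \vecm)\, e(\vecm\cdot\vecx),
\end{equation*}
so that integration by parts gives $\|\widehat f(\cdot, \vecm)\|_{\Cb_b^k}\ll (1+\|\vecm\|)^{-k}\|f\|_{\Cb_b^k}$; truncating at $\|\vecm\|\le M$ produces a tail error of order $M^{d-k}\|f\|_{\Cb_b^k}$. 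The task then reduces to estimating, for each $\|\vecm\|\le M$,
\begin{equation*}
S_\vecm(q):=\frac{1}{\#\cR_q}\sum_{\vecr\in\cR_q}\widehat f(\Gamma n_+(\vecr/q)D(q),\vecm)\,e(\vecm\cdot\vecr/q),
\end{equation*}
isolating from $\vecm=\vecnull$ the contribution of the main term $\int f\,d\mu_H\,d\vecx$.

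Next, using the identification of $\Gamma\bs\Gamma H$ with $\ASL_d(\Z)\bs\ASL_d(\R)$, I would compute the image of $\Gamma n_+(\vecr/q)D(q)$ explicitly: any unimodular completion of $(-\vecr,q)\in\Z^{d+1}$ to $\gamma\in\SL_{d+1}(\Z)$ produces a representative $(A_\vecr,\vecv_\vecr)\in\ASL_d(\R)$ in which $\vecv_\vecr$ encodes a multiplicative inverse of $\vecr$ modulo $q$. Then, Fourier-expanding
\begin{equation*}
\widehat f((A, \vecv),\vecm) = \sum_{\vecn\in\Z^d}\phi_{\vecm,\vecn}(A)\,e(\vecn\cdot\vecv)
\end{equation*}
in the affine coordinate (the $\phi_{\vecm,\vecn}$ being smooth functions on $\SL_d(\R)$ with appropriate $\SL_d(\Z)$-equivariance), the sum $S_\vecm(q)$ becomes a double sum over $\vecn\in\Z^d$ of $\phi_{\vecm,\vecn}(A_\vecr)$ weighted by an oscillatory factor of the form $e((\vecm\cdot\vecr+\vecn\cdot\bar\vecr)/q)$.

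The final step is to spectrally expand each $\phi_{\vecm,\vecn}$ on $\SL_d(\Z)\bs\SL_d(\R)$ into Maass cusp forms and Eisenstein series, so that the dependence on $A_\vecr$ becomes arithmetic data (Fourier--Whittaker coefficients) available in closed form; pairing this with the character $e((\vecm\cdot\vecr+\vecn\cdot\bar\vecr)/q)$ exhibits the Kloosterman-sum structure, to which Weil's bound applies to give square-root cancellation of size $q^{-1/2+\epsilon}$. I would then truncate the $\vecm$-sum at $M$, truncate the spectral parameter at $T$, estimate each piece using the Sobolev norm and the Weyl law, and balance $M$ and $T$ to obtain the announced exponent $-\tfrac12+d^2(2k-2d+1)/(2k^2)+\epsilon$.

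The main obstacle lies in the middle step above: correctly identifying the Kloosterman-sum structure coming from the unimodular completion -- where $\bar\vecr\pmod q$ is not a scalar inverse but depends on the choice of $\SL_d(\Z)$-representative of the complement to $(\vecr,q)$ -- and verifying that Weil's bound applies with the expected dependence on $\gcd(\vecm,\vecn,q)$. A secondary difficulty is ensuring that the spectral bounds on $\SL_d(\Z)\bs\SL_d(\R)$ (convexity bounds for Whittaker functions, Weyl counts for Maass forms) are sharp enough not to degrade the final exponent; it is precisely this budget that dictates the smoothness threshold $k\ge 2d^2-d+1$.
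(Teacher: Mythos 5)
Your overall two-level Fourier strategy (expand in the torus variable $\vecx$, then again in the affine variable on $\ASL_d(\Z)\backslash\ASL_d(\R)$, truncate using the $\Cb_b^k$-decay of the coefficients) does match the skeleton of the paper's argument, but the proposal has two genuine gaps, and the first is exactly the step you flag as the ``main obstacle'' without resolving it. The paper does \emph{not} work with an ad hoc unimodular completion of $(-\vecr,q)$ and a vector ``inverse'' $\bar\vecr$; its key novelty is a clean parametrisation of $\cR_q$ by pairs $(\gamma,u)\in\Gamma_{0,d}(q)\backslash\SL_d(\Z)\times(\Z/q\Z)^\times$ via $\vecr\equiv u\,\trans\gamma\vece_d\pmod q$, under which $\Gamma n_+(\vecr/q)D(q)$ has the explicit representative with linear part $q^{-1+1/d}B_0\gamma$ (where $B_0=\mathrm{diag}(qI_{d-1},1)$) and translation part $q^{-1}\overline{u}\vece_d$. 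Only the scalar inverse $\overline{u}$ appears, so after the second Fourier expansion the sum over $u$ produces the \emph{classical} one-variable Kloosterman sum $S(m_d,\trans\vecn\trans\gamma\vece_d;q)$, while the sum over $\gamma$ is handled separately by counting arguments and the index formula for $\Gamma_{0,d}(q)$. Without this (or an equivalent) parametrisation, your ``Kloosterman-sum structure'' depending on a choice-dependent $\bar\vecr$ is not pinned down, and it is not clear that Weil's bound in any number of variables applies; so the heart of the proof is missing rather than merely technical.

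The second gap is the treatment of the zero frequency. For $\vecm=\vecn=\vecnull$ there is no oscillatory factor at all, so neither Weil's bound nor any Kloosterman cancellation can produce the main-term rate; what is needed is effective equidistribution of the Hecke points $\{q^{-(d-1)/d}B_0\gamma:\gamma\in\Gamma_{0,d}(q)\backslash\SL_d(\Z)\}$ on $\SL_d(\Z)\backslash\SL_d(\R)$. The paper gets the $q^{-1/2+\epsilon}$ saving there from Clozel--Oh--Ullmo's theorem (uniform property $(T)$ for $\SL_d(\Q_p)$, $d\ge 3$), which is precisely how it bypasses bounds towards Ramanujan for $\GL_d$. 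Your proposed route --- spectrally expanding into Maass forms and Eisenstein series and pairing with the additive character --- would, for this zero mode, turn the saving into a question about Hecke eigenvalue bounds throughout the $\GL_d$ spectrum (compare the paper's $d=2$ remark, where the exponent $\theta$ towards Ramanujan appears); Weil's bound is simply not the relevant input, and the Whittaker/Kuznetsov machinery on $\GL_d$, $d\ge3$, that your plan implicitly invokes is neither carried out nor needed in the paper. Finally, the balancing you describe (a frequency cutoff $M$ against a spectral truncation $T$) does not by itself reproduce the exponent $-\tfrac12+\tfrac{d^2(2k-2d+1)}{2k^2}$; in the paper this comes from optimising two frequency thresholds $\vartheta_1,\vartheta_2$ against the smoothness $k$, with the constraint $k\ge 2d^2-d+1$ emerging from requiring $k\ge\max\{\tfrac{2d-1}{2\vartheta_2},d^2\}$, the $d^2$ coming from the Sobolev-embedding step needed to convert the $L^2$ Hecke equidistribution into a pointwise bound.
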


\begin{remark}
For $d=1$, this result was already known to Marklof in an effective form, with rate $O_f(q^{-\frac 12 + \epsilon})$  \cite{Marklof2010horospheres}. See also \cite[section 2.1]{EMSS2016} for a more detailed presentation of the argument. We merely mention that it relies on Weil's bound for Kloosterman sums as well, but is otherwise much simpler.
\end{remark}

\begin{remark} \label{rem:d=2}
We note that our proof also works when $d=2$ and hence recovers the previous result by Marklof and the third author \cite{LeeMarklof2017}. In this case, the error term becomes
 $c \|f\|_{\Cb_b^k} q^{-\frac 12 +\epsilon}(q^{\theta} + q^{\frac{2(2k-3)}{k^2}})$ for some constant $c>0$, 
where $\theta \ge 0$ is a Ramanujan bound for $\GL_2$ over $\Q$. The Ramanujan conjecture is the assertion that $\theta=0$ and the current record towards it is a result due to Kim and Sarnak which states that $\theta \le \frac 7{64}$, proved in \cite[Appendix 2]{KimSarnak2003}.
The reason for this discrepancy is that for $d \ge 3$, the use of bounds towards the Ramanujan conjecture for $\GL_d$ over $\Q$ can be bypassed.
Instead, Clozel, Oh and Ullmo \cite{ClozelOhUllmo2001} exploit the uniform version of Kazhdan's property $(T)$ for $\SL_d(\Q_p)$ for all primes $p$, when $d\geq 3$, as was obtained by Oh \cite{Oh2002}.
\end{remark}

As already hinted at, this result has several applications, for instance to the distribution of Frobenius numbers as in \cite{Marklof2010, Li2015} or to results about the shape of lattices as in \cite{EMSS2016}.
We highlight one in particular, which concerns the limiting distribution of the diameters of random Cayley graphs of $\Z/q\Z$ as $q \to +\infty$, following Marklof and Str\"ombergsson \cite{MarklofStrombergsson2013} (see also \cite{ShapiraZuck2019} for the case of random Cayley graphs of arbitrary finite abelian groups).
In \cite{AGG2010}, Amir and Gurel-Gurevich conjectured the existence of a limiting distribution, as $q \to +\infty$, for $\frac{\diam(q,d)}{q^{1/d}}$ where $\diam(q,d)$ denotes the diameter of the Cayley graph of $\Z/q\Z$ with respect to  the subset $\{\pm a_1, \ldots, \pm a_d \}$ where $(a_1, \ldots, a_d)$ is chosen uniformly at random from $\cR_q$.
Following the method expounded in \cite{MarklofStrombergsson2013}, the existence of this limiting distribution is a consequence of the main theorem in \cite{EMSS2016}.
By the same token, our \autoref{thm:main} implies the following result:
\begin{corollary} \label{cor:app}
For every $d \ge 3$, there exists a continuous non-increasing function $\Psi_d \colon \R_{\ge 0} \to \R_{\ge 0}$
with $\Psi_d(0)=1$ and $\lim_{R \to \infty} \Psi_d(R) = 0$,
and a constant $\eta_d > 0$
such that for every $\epsilon > 0$ and every $R \ge 0$, we have
\begin{equation}
\Prob\left(\frac{\diam(q,d)}{q^{1/d}} \ge R \right) = \Psi_d(R) + {O \left(q^{-\eta_d+\epsilon} \right)},
\end{equation}
where the implicit constant depends on $R$ and $\epsilon$.
\end{corollary}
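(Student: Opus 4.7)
The plan is to reduce \autoref{cor:app} to an application of \autoref{thm:main} via a smoothed indicator function. Following \cite{MarklofStrombergsson2013}, for $\veca = (a_1,\dots,a_d) \in \cR_q$ the diameter $\diam(q,d)$ of the Cayley graph $\mathrm{Cay}(\Z/q\Z,\{\pm a_1,\dots,\pm a_d\})$ equals the $\ell^1$-covering radius of the lattice $L(\veca,q) = \{\veck \in \Z^d : \veck\cdot\veca \equiv 0 \pmod q\}$, and after rescaling by $q^{1/d}$ this coincides with $F\bigl(\Gamma n_+(q^{-1}\veca) D(q)\bigr)$, where $F \colon \Gamma\backslash\Gamma H \to \R_{\ge 0}$ is the continuous function sending a point to the $\ell^1$-covering radius of the associated unimodular lattice. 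Setting
\[
\Psi_d(R) := \mu_H\bigl(\{x \in \Gamma\backslash\Gamma H : F(x) \ge R\}\bigr),
\]
the required continuity, monotonicity and normalization $\Psi_d(0)=1$ follow from continuity of $F$, the fact that $F \ge 0$ everywhere, and that level sets have zero $\mu_H$-measure (as in \cite{MarklofStrombergsson2013}).

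With this setup, the probability in \autoref{cor:app} equals $\frac{1}{\#\cR_q}\sum_{\veca\in\cR_q}\chi_R\bigl(\Gamma n_+(q^{-1}\veca)D(q)\bigr)$ where $\chi_R := \mathbf{1}[F \ge R]$. Since $\chi_R$ is merely an indicator, I would sandwich it between smooth approximants $\chi_R^\pm \in \Cb_b^k(\Gamma\backslash\Gamma H)$ with $\chi_R^- \le \chi_R \le \chi_R^+$, $\int(\chi_R^+-\chi_R^-)\,d\mu_H \ll_R \delta$, and $\|\chi_R^\pm\|_{\Cb_b^k} \ll_R \delta^{-k}$, for a parameter $\delta > 0$ to be chosen. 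Applying \autoref{thm:main} to these functions (pulled back trivially to $\Gamma\backslash\Gamma H\times\T^d$, so the integration over $\T^d$ is vacuous) pinches $\Psi_d(R)$ against the desired average up to an error of $O_R\bigl(\delta + \delta^{-k}\,q^{-\alpha_k + \epsilon}\bigr)$, where $\alpha_k := \tfrac12 - \tfrac{d^2(2k-2d+1)}{2k^2}$ is strictly positive once $k \ge 2d^2-d+1$. Balancing the two terms by choosing $\delta = q^{-\alpha_k/(k+1)}$ yields the statement with $\eta_d = \alpha_k/(k+1) > 0$.

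The main obstacle I anticipate is the quantitative boundary volume bound $\mu_H(\{x \in \Gamma\backslash\Gamma H : |F(x) - R| \le \delta\}) \ll_R \delta$, which controls the $L^1$-gap between $\chi_R^-$ and $\chi_R^+$. The covering-radius function $F$ is $1$-Lipschitz with respect to a natural invariant metric on the space of unimodular lattices, but the Lipschitz estimate degenerates near the cusp, so the argument must combine it with a standard Siegel-type volume estimate for the measure of deep cusp excursions in $\Gamma\backslash\Gamma H$. Once this boundary estimate is in place, the mollifiers $\chi_R^\pm$ are produced by convolving $\chi_R$ on $H$ with a non-negative smooth bump supported in a $\delta$-ball whose $k$-th derivatives scale as $\delta^{-k}$, and the remainder of the argument is a direct substitution into \autoref{thm:main}.
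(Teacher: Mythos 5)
Your proposal follows essentially the same route as the paper: identify $\diam(q,d)$ with the $\ell^1$-covering radius $\rho(\fP,L_{q,\veca})$ via Marklof--Str\"ombergsson, set $\Psi_d(R)=\mu(\{\rho(\fP,\cdot)\ge R\})$, sandwich the indicator between smooth functions with $\|\cdot\|_{\Cb_b^k}\ll\delta^{-k}$ and $L^1$-error $\ll\delta$ (the paper gets this from the thin-boundary property of the level set, via the Str\"ombergsson--Venkatesh lemma, rather than your Lipschitz-plus-cusp argument, but the content is the same), apply \autoref{thm:main} through the trivial pullback, and balance $\delta=q^{-\alpha_k/(k+1)}$ with $k=2d^2-d+1$, exactly as in the paper's proof of \autoref{cor:app_err}. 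The one imprecision is your claim that the rescaled graph diameter \emph{equals} $q^{1/d}$ times the torus diameter: by \cite[Proposition 1]{MarklofStrombergsson2013} they agree only up to an additive $d/2$, i.e.\ up to $O(q^{-1/d})$ after rescaling, but this is harmlessly absorbed into the boundary estimate since $\eta_d<1/d$.
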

We state a more precise version of the above corollary as \autoref{cor:app_err}, which also contains an explicit description of the limiting distribution in terms of the space of $d$-dimensional unimodular lattices.
At this point, we do however note that the decay of $\Psi_d$ as $R \to +\infty$ is known: it is proved in \cite[section 3.3]{MarklofStrombergsson2013} that for $d \ge 2$,
\begin{equation}
\Psi_d(R) = \frac 1{2 \zeta(d) R^d} + O_d \left(\frac 1{R^{d+1+\frac 1{d-1}}} \right).
\end{equation}
In order to deduce this corollary, which we do in \autoref{sec:app}, the explicit dependence on $f$ in the error term of \autoref{thm:main} is required.

Our strategy to prove \autoref{thm:main} is based on harmonic analysis and Weil's bound for Kloosterman sums, more precisely:
\begin{itemize}
\item in \autoref{sec:ratpts}, which contains the main novelty of our approach, we avoid the need to obtain an explicit solution to a (non-linear) system of equations modulo $q$ --- as was done for $d=2$ in \cite{LeeMarklof2017} --- by introducing a helpful parametrisation of $\cR_q$;
\item we then use Fourier analysis on the space of affine lattices in order to estimate the sum we are interested in --- this follows a strategy introduced by Str\"ombergsson in \cite{Stromb2015} for the space of shifted lattices in $2$ dimensions and we extend the required Fourier tools to any dimension in \autoref{sec:Fourier};
\item these estimates are carried out in \autoref{sec:main}: to get to the main term, the key ingredient is a deep result of Clozel, Oh and Ullmo \cite{ClozelOhUllmo2001}; to bound the error terms, we use estimates for Ramanujan and Kloosterman sums, combined with various counting arguments.
\end{itemize}

\noindent
{\bf Acknowledgements:}
We are very grateful to Jens Marklof, Hee Oh and Ze\'ev Rudnick for several insightful conversations and judicious comments on a previous version of this paper. 
We also thank the referees for their careful reading and several suggestions which led to a much-improved presentation.
The research of Daniel El-Baz and Bingrong Huang was supported by the European Research Council (ERC) under the European Union’s Horizon 2020 research and innovation programme (Grant agreement No. 786758). 
Daniel El-Baz is  supported by the Austrian Science Fund (FWF), projects F-5512 and Y-901.
Bingrong Huang is supported by NSFC (Nos. 12001314 and 12031008). 
Min Lee is supported by a Royal Society University Research Fellowship.

\section{Primitive rational points on horospheres} \label{sec:ratpts}
Let $d\geq 1$,  $\G = \SL_{d+1}(\R)$ and $\Gamma=\SL_{d+1}(\Z)$.
For any $g\in \G$, we write $g=\sm A& \vecb \\ \trans \vecc & D\esm$ where
$A\in \M_{d}(\R)$, $\vecb, \vecc\in \R^d$ and $D\in \R$.
Let $I_k$ be the $k\times k$ identity matrix.

For a positive integer $q$, we define the following congruence subgroup of $\SL_d(\Z)$:
\begin{equation}
\Gamma_{0, d}(q) = \left\{\gamma\in \SL_d(\Z)\; : \; \gamma\equiv \bpm * & * \\ \trans \vecnull & u\epm \pmod{q}\right\}.
\end{equation}
Note that for any $\gamma\in \Gamma_{0, d}(q)$ satisfying $\gamma\equiv \sm * & * \\ \trans\vecnull & u\esm\pmod{q}$,
$\gcd(u, q)=1$ holds.

We record the formula for the index of $\Gamma_{0,d}(q)$ inside $\SL_d(\Z)$.
\begin{proposition}\label{prop:Gamma0_index} For every $d \ge 2$ and $q \ge 1$, we have
\begin{equation}
[\SL_d(\Z) : \Gamma_{0,d}(q)] = q^{d-1} \prod_{p \mid q} \frac{1-p^{-d}}{1-p^{-1}}.
\end{equation}
\end{proposition}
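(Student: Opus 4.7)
My plan is to identify $\Gamma_{0,d}(q)$ as the stabiliser of a natural class of primitive vectors and then apply orbit--stabiliser.

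First I would invoke strong approximation: reduction modulo $q$ gives a surjection $\SL_d(\Z) \twoheadrightarrow \SL_d(\Z/q\Z)$, and $\Gamma_{0,d}(q)$ is the preimage of the subgroup $\overline{P}(q) \subset \SL_d(\Z/q\Z)$ consisting of matrices whose last row has the form $(0,\ldots,0,u)$ with $u \in (\Z/q\Z)^*$. It follows that
\[
[\SL_d(\Z):\Gamma_{0,d}(q)] = [\SL_d(\Z/q\Z):\overline{P}(q)],
\]
reducing the question to a finite group-theoretic computation.

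Next I would interpret the right-hand side as the size of an orbit. Let $V = \{\vecr \in (\Z/q\Z)^d : \gcd(\vecr,q)=1\}$ be the set of primitive rows; $\SL_d(\Z/q\Z)$ acts on $V$ from the right, and $(\Z/q\Z)^*$ acts on $V$ by scalar multiplication. This scalar action is free (a one-line Bezout argument: if $u\vecr \equiv \vecr \pmod q$ and $\sum x_i r_i \equiv 1 \pmod q$, then $u-1 \equiv 0$). The induced $\SL_d(\Z/q\Z)$-action on $V/(\Z/q\Z)^*$ is transitive, since any primitive row in $(\Z/q\Z)^d$ lifts to a primitive vector in $\Z^d$ which can be completed to a matrix in $\SL_d(\Z)$, and one checks directly that the stabiliser of the class of $(0,\ldots,0,1)$ is precisely $\overline{P}(q)$. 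Orbit--stabiliser then gives
\[
[\SL_d(\Z/q\Z):\overline{P}(q)] = \frac{|V|}{\varphi(q)}.
\]

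Finally I would close by counting primitive vectors via the Jordan totient $|V| = J_d(q) = q^d \prod_{p\mid q}(1-p^{-d})$ and Euler's formula $\varphi(q) = q\prod_{p\mid q}(1-p^{-1})$; their ratio is exactly the claimed $q^{d-1}\prod_{p\mid q}(1-p^{-d})/(1-p^{-1})$. I do not expect any serious obstacle here: the only mildly technical points are the freeness of the scalar action and the lift from $(\Z/q\Z)^d$ to $\Z^d$ used for transitivity, both elementary consequences of primitivity.
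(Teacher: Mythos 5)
Your proof is correct, but it takes a genuinely different route from the paper's. The paper sandwiches the principal congruence subgroup, $\Gamma(q)\subset\Gamma_{0,d}(q)\subset\SL_d(\Z)$, computes $[\SL_d(\Z):\Gamma(q)]=\#\SL_d(\Z/q\Z)$ from the standard formula for $\#\GL_d(\Z/q\Z)$, and then divides by $[\Gamma_{0,d}(q):\Gamma(q)]=q^{d-1}\#\GL_{d-1}(\Z/q\Z)$; so it is an exercise in multiplying and cancelling orders of finite matrix groups. You instead use surjectivity of reduction modulo $q$ to identify the index with $[\SL_d(\Z/q\Z):\overline{P}(q)]$ and then apply orbit--stabiliser to the action on primitive rows modulo scalars, so that the answer comes out as $J_d(q)/\varphi(q)$ with no group orders computed at all. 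Your key steps check out: freeness of the scalar action is exactly the Bezout argument you indicate; transitivity needs $d\ge 2$ (lifting a primitive row mod $q$ to a primitive integer vector and completing it to a matrix in $\SL_d(\Z)$), which matches the hypothesis; and the stabiliser of the class of $\trans\vece_d$ is $\overline{P}(q)$, whose preimage is $\Gamma_{0,d}(q)$ because the unit condition on $u$ is automatic from $\det\gamma=1$. What each approach buys: the paper's is self-contained modulo a citable counting formula and avoids any discussion of transitivity or lifting; yours is arguably more conceptual and has the pleasant feature that it essentially re-proves, in one stroke, the parametrisation of $\cR_q$ by $\Gamma_{0,d}(q)\bsl\SL_d(\Z)\times(\Z/q\Z)^\times$ that the paper establishes separately in Lemma~\ref{lem:qprimitive_gamma} (note the paper derives that lemma \emph{from} Proposition~\ref{prop:Gamma0_index} and the count of $\#\cR_q$, whereas your argument proves the transitivity and stabiliser statements directly, so there is no circularity). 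Both routes ultimately rely on surjectivity of $\SL_d(\Z)\to\SL_d(\Z/q\Z)$, explicitly in your case and implicitly in the paper's identification of $\#\SL_d(\Z/q\Z)$ with $[\SL_d(\Z):\Gamma(q)]$.
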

\begin{proof}[Proof (sketch)]
It is a standard fact (for an explicit reference see, for instance,  \cite[Corollary 2.9]{Han2006}) that
\begin{equation}
\#\GL_d(\Z/q\Z) = q^{d^2} \prod_{p \mid q}  \left(1 - \frac 1{p^d} \right) \left(1 - \frac 1{p^{d-1}} \right) \cdots \left(1 - \frac 1p \right),
\end{equation}
from which it follows that
\begin{equation}
\#\SL_d(\Z/q\Z) = q^{d^2-1} \prod_{p \mid q} \left(1-\frac 1{p^d} \right) \left(1 - \frac 1{p^{d-1}} \right) \cdots \left(1 - \frac 1{p^2} \right).
\end{equation}
This last cardinality is precisely the index of the principal congruence subgroup \begin{equation} \Gamma_d(q) = \{ M \in \SL_d(\Z) \, : \, M \equiv I_d \pmod q \} \end{equation} inside $\SL_d(\Z)$ (for a reference about the surjectivity of the reduction map, see \cite[Proof of Lemma 1.38]{Shimura1971} for instance).
We note the inclusions $\Gamma_d(q) \subset \Gamma_{0,d}(q) \subset \SL_d(\Z)$ and therefore use the identity
\begin{equation} [\SL_d(\Z):\Gamma_d(q)] = [\SL_d(\Z):\Gamma_{0,d}(q)] [\Gamma_{0,d}(q):\Gamma_d(q)] \end{equation}
to conclude.
All that remains is to compute $[\Gamma_{0,d}(q):\Gamma_d(q)]$ and it is easy to see that it is equal to $q^{d-1} \#\GL_{d-1}(\Z/q\Z)$.
The desired formula follows.
\end{proof}


For a positive integer $q$, recall that
\begin{equation}\label{e:scrRq}
\cR_q = \left\{\vecr\in \left(\Z \cap (0, q] \right)^d \;:\; \gcd(\vecr, q) = 1 \right\}.
\end{equation}
We now give a simple formula and a lower bound for the size of this set.
\begin{lemma} For $d \ge 1$ and $q \ge 1$, we have
\begin{equation}
\# \cR_q
= \sum_{\delta\mid q}\mu(\delta) (q/\delta)^d.
\end{equation}
\end{lemma}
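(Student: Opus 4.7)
The plan is to apply Möbius inversion to a natural partition identity. The key observation is that one may classify every tuple $\vecr \in (\Z \cap (0,q])^d$ according to the value of $\delta := \gcd(\vecr, q)$, which is necessarily a divisor of $q$. Writing $\vecr = \delta \vecs$, the map $\vecr \mapsto \vecs$ is a bijection between the set of tuples $\vecr \in (\Z \cap (0,q])^d$ with $\gcd(\vecr, q) = \delta$ and the set $\cR_{q/\delta}$ (since $\gcd(\vecs, q/\delta) = 1$ is equivalent to $\gcd(\delta \vecs, q) = \delta$, and the range $0 < r_i \le q$ corresponds to $0 < s_i \le q/\delta$).

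Since the tuples $\vecr \in (\Z \cap (0,q])^d$ are partitioned by the value of $\gcd(\vecr, q)$, counting on both sides yields
\beq
q^d = \sum_{\delta \mid q} \# \cR_{q/\delta} = \sum_{\delta \mid q} \# \cR_\delta,
\eeq
where the second equality comes from reindexing $\delta \mapsto q/\delta$. In the language of Dirichlet convolution, this reads $\operatorname{Id}^d = \mathbf{1} * (\# \cR_{\cdot})$, where $\operatorname{Id}^d(n) = n^d$ and $\mathbf{1}$ is the constant function equal to $1$. Applying Möbius inversion immediately gives
\beq
\# \cR_q = (\mu * \operatorname{Id}^d)(q) = \sum_{\delta \mid q} \mu(\delta) (q/\delta)^d,
\eeq
which is the claimed formula.

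There is essentially no obstacle: the argument is a textbook instance of Möbius inversion once one notices the scaling bijection $\vecr \mapsto \vecr/\delta$. The only minor point to verify is that the bijection respects the range of coordinates, but this is immediate since $r_i = \delta s_i$ with $1 \le r_i \le q$ precisely when $1 \le s_i \le q/\delta$.
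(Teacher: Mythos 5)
Your proof is correct and follows essentially the same route as the paper: partition the $d$-tuples in $(\Z \cap (0,q])^d$ by the value of $\gcd(\vecr,q)$ to obtain $q^d = \sum_{\delta \mid q} \#\cR_{q/\delta}$, then apply M\"obius inversion. The only difference is that you spell out the scaling bijection $\vecr \mapsto \vecr/\delta$ explicitly, which the paper leaves implicit.
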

\begin{remark}
Note that when $d=1$, that is  $\varphi(q)$, Euler's totient function, as it should be.
\end{remark}
\begin{proof}
By partitioning all $d$-tuples $\vecr \in (\Z \cap [1,q])^d$ according to the value of $\gcd(\vecr, q)$, we see that
\begin{equation}
q^d = \sum_{\delta \mid q} \# \cR_{q/\delta}.
\end{equation}
The claim follows by M\"{o}bius inversion.
\end{proof}
We note the following trivial corollary.
\begin{corollary}
For $d\ge 1$ and $q \ge 1$,
\begin{equation} \label{e:RqEuler}
\# \cR_q = q^d \prod_{p \mid q} \left(1 - \frac 1{p^d} \right).
\end{equation}
In particular, for $d \ge 2$ and $q \ge 1$,
\begin{equation} \label{e:Rqlbd}
 \# \cR_q > \frac 1{\zeta(d)} q^d.
\end{equation}
\end{corollary}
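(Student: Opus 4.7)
The first identity follows immediately from the preceding lemma, which gives $\#\cR_q = \sum_{\delta \mid q} \mu(\delta) (q/\delta)^d$. The plan is to factor out $q^d$ and recognize the remaining sum as an Euler product. Concretely, I would write
\beq
\#\cR_q = q^d \sum_{\delta \mid q} \frac{\mu(\delta)}{\delta^d},
\eeq
and then observe that the arithmetic function $q \mapsto \sum_{\delta \mid q} \mu(\delta)/\delta^d$ is multiplicative (as a Dirichlet convolution of multiplicative functions restricted to divisors of $q$). Evaluating on a prime power $p^k$ with $k \ge 1$ yields $1 + \mu(p)/p^d = 1 - p^{-d}$, and so the sum factorises as $\prod_{p \mid q}(1 - p^{-d})$. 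This establishes \eqref{e:RqEuler}.

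For the strict lower bound \eqref{e:Rqlbd}, the idea is to compare the finite product $\prod_{p \mid q}(1 - p^{-d})$ with the full Euler product for the Riemann zeta function. For $d \ge 2$, the Euler product
\beq
\frac{1}{\zeta(d)} = \prod_{p} \left(1 - \frac 1{p^d}\right)
\eeq
converges absolutely. Since every factor $(1-p^{-d})$ lies strictly in $(0,1)$, and there exist (infinitely many) primes $p$ not dividing $q$, we have
\beq
\prod_{p \mid q}\left(1 - \frac 1{p^d}\right) > \prod_{p}\left(1 - \frac 1{p^d}\right) = \frac{1}{\zeta(d)}.
\eeq
Multiplying through by $q^d$ and using \eqref{e:RqEuler} gives $\#\cR_q > q^d/\zeta(d)$, as desired.

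There is no real obstacle: both statements are elementary consequences of M\"obius inversion (already performed in the previous lemma) together with the Euler product expansion of $\zeta(d)$. The only mild subtlety is ensuring that the comparison between the partial and full Euler products is strict, which is handled by noting that at least one prime does not divide $q$.
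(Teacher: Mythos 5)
Your proof is correct and follows exactly the route the paper intends: it deduces the Euler product \eqref{e:RqEuler} from the M\"obius-sum formula of the preceding lemma by multiplicativity, and obtains the strict bound \eqref{e:Rqlbd} by comparing the finite product over $p \mid q$ with the full Euler product $\prod_p (1-p^{-d}) = 1/\zeta(d)$, noting strictness because some prime does not divide $q$. The paper states this corollary without proof as ``trivial,'' and your argument is precisely the standard verification it has in mind.
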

\begin{remark}
The above inequality generalises \cite[(2.2)]{LeeMarklof2017}, whose proof has an unfortunate mistake (see the first inequality in \cite[(2.6)]{LeeMarklof2017}).
\end{remark}

Recall the following subgroup of $\G$:
\begin{equation}
{\Hb} = \bigg\{ \bpm A & \vecv \\ \trans \vecnull & 1\epm \;:\;  A\in \SL_{d}(\R), \vecv \in \R^{d}\bigg\}.
\end{equation}
For a positive integer $q$, we also recall that
\begin{equation}
	D(q) = \bpm q^{\frac{1}{d}} I_{d} & \vecnull \\ \trans \vecnull & q^{-1}\epm.
\end{equation}
By \eqref{e:emss_lem} we see that,
for every $\vecr\in \cR_q$,
there exist $A\in \SL_{d}(\R)$ and $\vecx\in \R^d$ such that
\begin{equation}\label{e:emss_matrix}
\Gamma n_+(q^{-1} \vecr) D(q) = \Gamma \bpm A & \vecx\\ \trans \vecnull & 1\epm.
\end{equation}
This is equivalent to the existence of $A\in \SL_d(\R)$ and $\vecx\in \R^d$, uniquely determined modulo $\Gamma$, satisfying
\begin{align}\label{e:modular_eq}
\bpm A & \vecx\\ \trans \vecnull & 1 \epm (n_+(q^{-1}\vecr) D(q))^{-1}
& =
\bpm A& \vecx\\ \trans \vecnull & 1\epm
\bpm q^{-\frac{1}{d}} I_d & \vecnull \\ \trans \vecnull & q\epm
\bpm I_{d} & \vecnull \\ -q^{-1 } \trans \vecr & 1\epm
\\ & =
\bpm \frac{q^{1-\frac{1}{d}} A -q\vecx \trans \vecr}{q} & q\vecx\\ -\trans \vecr & q\epm
\in \Gamma.
\nonumber
\end{align}
Let $\vecs = q\vecx$ and $B=q^{\frac{d-1}{d}}A$.
By the above relation,
\begin{equation}
\vecs\in \Z^d, \quad
\frac{1}{q}(B-\vecs \trans \vecr)\in \M_{d}(\Z)
\quad \text{ and }\quad
\det(B) = q^{d-1}\det(A) =q^{d-1}.
\end{equation}
So
\begin{equation}
B \in \M_d(\Z) \quad \text{ and } \quad B\equiv \vecs \trans \vecr \pmod{q}.
\end{equation}
Since
\begin{equation}\label{e:modular_eq_sol}
\bpm \frac{B-\vecs \trans \vecr}{q} & \vecs\\ -\trans \vecr & q\epm
\in \Gamma,
\end{equation}
we get that $\gcd(\vecs, q) = 1$ (see also \cite[Lemma~2.4]{EMSS2016}).

We now come to the goal of this section, which is to parametrise $\cR_q$ in terms of $\Gamma_{0, d}(q)\bsl \SL_d(\Z)$ and $(\Z/q\Z)^\times$.

Let $\mathcal B_q$ be a set of representatives for $\Gamma_{0,d}(q)\backslash \SL_d(\Z)$.
\begin{lemma}\label{lem:qprimitive_gamma}
We have
\begin{equation}\label{e:qprimitive_gamma}
\cR_q = \left\{\trans \gamma \bpm \vecnull \\ u\epm \pmod{q} \; :\; \gamma\in \mathcal B_q, \; u\in (\Z/q\Z)^\times\right\}.
\end{equation}
\end{lemma}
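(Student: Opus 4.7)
The plan is to prove both inclusions in \eqref{e:qprimitive_gamma} after first interpreting the right-hand side. Observe that for any $\gamma \in \SL_d(\Z)$ and $u \in \Z$, the column vector $\trans\gamma \sm \vecnull \\ u\esm$ is simply $u$ times the transpose of the last row of $\gamma$. So the right-hand side is the set of vectors in $(\Z/q\Z)^d$ of the form $u \cdot \trans(\text{last row of } \gamma) \pmod q$ for $\gamma \in \mathcal{B}_q$ and $u \in (\Z/q\Z)^\times$, identified with their representatives in $(\Z \cap (0,q])^d$.

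For the inclusion of the right-hand side in $\cR_q$, note that since $\gamma \in \SL_d(\Z)$ has determinant $1$, cofactor expansion along the last row shows that the entries of its last row have greatest common divisor $1$. Multiplying by any $u \in (\Z/q\Z)^\times$ and reducing modulo $q$ therefore still yields a vector whose gcd with $q$ is $1$, hence an element of $\cR_q$.

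For the reverse inclusion, the key step is to realise any $\vecr \in \cR_q$ as the last row, modulo $q$, of some $\gamma_0 \in \SL_d(\Z)$. Since $\gcd(\vecr, q) = 1$, the reduction $\vecr \bmod q$ is a primitive vector in $(\Z/q\Z)^d$. Because $\SL_d(\Z/q\Z)$ acts transitively on such primitive vectors (by standard elementary-matrix manipulations) and the reduction map $\SL_d(\Z) \twoheadrightarrow \SL_d(\Z/q\Z)$ is surjective (strong approximation, or again via explicit elementary lifts), we can produce such a $\gamma_0$. Now write $\gamma_0 = \alpha \gamma$ with $\alpha \in \Gamma_{0,d}(q)$ and $\gamma \in \mathcal{B}_q$. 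By the definition of $\Gamma_{0,d}(q)$, we have $\alpha \equiv \sm * & * \\ \trans\vecnull & u\esm \pmod q$ for some $u \in (\Z/q\Z)^\times$, so the last row of $\alpha\gamma$ modulo $q$ equals $(0,\ldots,0,u)\gamma \equiv u\cdot(\text{last row of }\gamma) \pmod q$. Transposing gives $\vecr \equiv \trans\gamma\sm \vecnull \\ u\esm \pmod q$, as required.

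The main obstacle is the completion step, i.e.\ producing $\gamma_0 \in \SL_d(\Z)$ with last row congruent to $\vecr$ modulo $q$. This is a classical but non-trivial fact; it can be proved either by invoking surjectivity of $\SL_d(\Z) \to \SL_d(\Z/q\Z)$, or, more elementarily, by first adjusting $\vecr$ within its residue class modulo $q$ to obtain a representative with $\gcd$ equal to $1$ over $\Z$ (via Dirichlet/Chinese remainder considerations on prime divisors not dividing $q$) and then applying the standard Hermite/Smith normal-form argument to complete it to a unimodular matrix. As a sanity check, the counts match: by \autoref{prop:Gamma0_index} and \eqref{e:RqEuler}, $\#\mathcal{B}_q \cdot \#(\Z/q\Z)^\times = q^{d-1}\prod_{p\mid q}\tfrac{1-p^{-d}}{1-p^{-1}} \cdot q\prod_{p\mid q}(1-p^{-1}) = q^d\prod_{p\mid q}(1-p^{-d}) = \#\cR_q$, so the parametrisation is in fact bijective, although the lemma only asserts set equality.
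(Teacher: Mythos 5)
Your proof is correct, but it takes a genuinely different route from the paper's. Both arguments begin identically: the containment of the right-hand side in $\cR_q$ via the primitivity of the last row of $\gamma\in\SL_d(\Z)$. For the reverse containment, however, the paper never constructs a preimage of a given $\vecr\in\cR_q$; instead it observes that $\#\cR_q=\#\mathcal B_q\cdot\varphi(q)$ (combining \autoref{prop:Gamma0_index} with \eqref{e:RqEuler}) and then proves the map $(\gamma,u)\mapsto u\,\trans\gamma\vece_d \bmod q$ is \emph{injective} (if $u\trans\gamma\vece_d\equiv u'\trans\gamma'\vece_d$ then $\gamma(\gamma')^{-1}\in\Gamma_{0,d}(q)$, forcing $\gamma=\gamma'$ and $u\equiv u'$), so equality follows by counting. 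You instead prove \emph{surjectivity} directly: complete $\vecr$ to a matrix $\gamma_0\in\SL_d(\Z)$ with last row $\equiv\vecr\pmod q$ (via transitivity of $\SL_d(\Z/q\Z)$ on primitive vectors together with surjectivity of reduction, or a normal-form completion), then factor $\gamma_0=\alpha\gamma$ with $\alpha\in\Gamma_{0,d}(q)$, $\gamma\in\mathcal B_q$, and read off $u$ as the lower-right entry of $\alpha$ modulo $q$; your cardinality computation is then only a sanity check. What each buys: the paper's argument stays entirely within the index computation it has already done and avoids any lifting/completion input; yours is constructive, yields surjectivity independently of the counting (and hence the bijectivity directly once combined with it or with your injectivity-free count), at the price of invoking the classical but nontrivial completion fact. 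One small caveat: your transitivity/completion step requires $d\ge 2$ (for $d=1$ one cannot realise an arbitrary unit as the ``last row'' of the identity), though in that degenerate case the lemma is immediate since $\mathcal B_q=\{1\}$ and the right-hand side is just $(\Z/q\Z)^\times$, and the lemma is only used for $d\ge 2$ anyway.
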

\begin{proof}

For any $\gamma\in \scrB_q$ and $u\in (\Z/q\Z)^\times$, there exists $\vecr\in (\Z\cap (0, q])^d$ such that
\begin{equation}
u\trans \gamma \vece_d = \trans\gamma \bpm \vecnull \\ u\epm \equiv \vecr \pmod{q},
\end{equation}
where $\vece_d$ is the last vector of the canonical basis of $\R^d$.
We claim that $\gcd(\vecr, q)=1$.
Let $\trans\veca$ be the last row of $\gamma$, that is, $\veca = \trans\gamma \vece_d$.
If $\gcd(\vecr, q)\neq 1$, since $u\veca-\vecr\equiv0\pmod{q}$,
this implies that $\gcd(u\veca, q)\neq 1$, so $\gcd(\veca)\neq 1$.
This contradicts the fact that $\gamma\in \SL_d(\Z)$.

 Note that, using \eqref{e:RqEuler} and \autoref{prop:Gamma0_index}, it follows that $\#\mathcal{R}_q = \# \mathcal{B}_q \cdot \varphi(q)$.
 Therefore, we only need to prove that $\trans\gamma u\vece_d \not\equiv \trans\gamma' u'\vece_d\pmod{q}$
if $(\gamma,u\pmod{q})\neq (\gamma',u' \pmod{q})$ for $\gamma, \gamma' \in \mathcal{B}_q$.
Indeed, suppose $\trans\gamma u\vece_d \equiv \trans\gamma'u' \vece_d\pmod{q}$.

Then
\[ \trans(\gamma (\gamma')^{-1})u\vece_d \equiv (\trans\gamma')^{-1} \trans\gamma u\vece_d\equiv u'\vece_d\pmod{q}, \]
that is, $\gamma (\gamma')^{-1}\in\Gamma_{0,d}(q)$.
Since $\gamma,\gamma'\in \mathcal{B}_q$, we get $\gamma=\gamma'$.
Using $\trans\gamma u\vece_d\equiv \trans\gamma'u'\vece_d\pmod{q}$
again, we obtain $u\equiv u'\pmod{q}$. This proves the lemma.
\end{proof}

Let $B_0 = \sm q I_{d-1} & \\ & 1\esm$; for every $\gamma\in \Gamma_{0, d}(q) \bsl \SL_d(\Z)$ and every $u \in (\Z/q\Z)^\times$,
if we set
\begin{align}
& \vecr\equiv u\trans\gamma \vece_d\pmod{q}, \label{e:parameter_r}\\
& \vecs = \overline{u} \vece_d, \; u\overline{u}\equiv 1\pmod{q}, \label{e:parameter_s}\\
& B= B_0 \gamma, \label{e:parameter_B}
\end{align}
then by \autoref{lem:qprimitive_gamma} we have $\vecr\in \cR_q$.
Moreover $\det(B)=q^{d-1}$ and $B\equiv \vecs \trans \vecr\pmod{q}$.
One checks that
\begin{equation}
\begin{pmatrix} q^{-1+\frac 1d} B & q^{-1} \vecs \\ ^t \vecnull & 1 \end{pmatrix} D(q)^{-1} n_+(-q^{-1}\vecr) \in \Gamma,
\end{equation}
which implies \eqref{e:emss_lem} and the following lemma, which is used in \S\ref{sec:main}.
\begin{lemma}\label{lem:emss_parameter}
For every $\gamma\in \Gamma_{0, d}(q) \bsl \SL_d(\Z)$ and $u\in (\Z/q\Z)^\times$, if we define $\vecr$, $\vecs$ and $B$
as in \eqref{e:parameter_r}-\eqref{e:parameter_B}, then
\begin{equation}\label{e:emss_parameter}
\Gamma n_+(q^{-1}\vecr) D(q)
= \Gamma \bpm q^{-1+\frac{1}{d}} B & q^{-1} \vecs \\ \trans \vecnull & 1\epm.
\end{equation}
\end{lemma}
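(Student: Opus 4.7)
\noindent The plan is to deduce the lemma from the equivalence already extracted in \eqref{e:modular_eq}--\eqref{e:modular_eq_sol}: for any $\vecr\in \cR_q$, the identity \eqref{e:emss_matrix} holds with $A = q^{-1+1/d}B$ and $\vecx = q^{-1}\vecs$ if and only if the matrix
\[ M_{B,\vecs} := \bpm (B-\vecs\trans\vecr)/q & \vecs \\ -\trans\vecr & q\epm \]
lies in $\Gamma$, which in turn is equivalent to $B\in \M_d(\Z)$, $\det B = q^{d-1}$, $\vecs\in\Z^d$ and $B\equiv \vecs\trans\vecr\pmod{q}$. (The condition $\det M_{B,\vecs}=1$ is automatic from $\det B = q^{d-1}$ via a Schur-complement computation on the last column and row; the fact that $\gcd(\vecs,q)=1$ then follows as in the discussion preceding the lemma.) Thus it suffices to verify these conditions for the specific choice $B = B_0\gamma$, $\vecs = \overline{u}\vece_d$, with $\vecr$ as in \eqref{e:parameter_r}, and then invoke the equivalence to read off \eqref{e:emss_parameter}.

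Three of the four conditions are immediate from the definitions. Since $\gamma\in\SL_d(\Z)$ and $B_0 = \diag(qI_{d-1},1)\in \M_d(\Z)$ has determinant $q^{d-1}$, the product $B = B_0\gamma$ has integer entries and determinant $q^{d-1}$. The vector $\vecs = \overline{u}\vece_d$ lies in $\Z^d$. Hence the only substantive step is to establish the rank-one congruence $B\equiv \vecs\trans\vecr\pmod{q}$.

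For that step, the block structure of $B_0$ gives $B = B_0\gamma \equiv \vece_d\trans\vece_d\gamma\pmod{q}$, namely the matrix whose last row is the last row of $\gamma$ and whose other rows vanish. On the other hand, transposing $\vecr\equiv u\trans\gamma\vece_d\pmod{q}$ yields $\trans\vecr\equiv u\trans\vece_d\gamma\pmod{q}$, whence
\[ \vecs\trans\vecr \equiv \overline{u}u\,\vece_d\trans\vece_d\gamma \equiv \vece_d\trans\vece_d\gamma\pmod{q}, \]
using $u\overline{u}\equiv 1\pmod{q}$. The two sides agree, which completes the verification. No serious obstacle is anticipated: the proof reduces to this piece of rank-one bookkeeping, the key design feature being that $B_0$ isolates the last row of $\gamma$ modulo $q$, precisely matching the rank-one matrix $\vecs\trans\vecr$ built from $\vece_d$ and the last row of $\gamma$; the only place one must tread carefully is the transposition convention relating $\vecr$ and $\trans\vecr$ to the last row of $\gamma$.
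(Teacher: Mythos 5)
Your proposal is correct and follows essentially the same route as the paper: the lemma is read off from the equivalence \eqref{e:modular_eq}--\eqref{e:modular_eq_sol}, after checking $\det B=q^{d-1}$ and the rank-one congruence $B\equiv\vecs\trans\vecr\pmod q$ for $B=B_0\gamma$, $\vecs=\overline u\vece_d$, $\vecr\equiv u\trans\gamma\vece_d$. Your Schur-complement remark and the explicit computation $B_0\gamma\equiv\vece_d\trans\vece_d\gamma\equiv\vecs\trans\vecr\pmod q$ just spell out details the paper leaves implicit.
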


\section{Fourier analysis on the space of lattice translates} \label{sec:Fourier}
In this section, we generalise the results given in \cite[section~4]{Stromb2015} and \cite[section~3]{LeeMarklof2017} to an arbitrary dimension.
When comparing with \cite{Stromb2015}, one should keep in mind that he uses a different representation for $\ASL_2(\R)$.

For $d\geq 2$, we define
\begin{equation}
\ASL_{d}(\R):= \SL_{d}(\R) \ltimes \R^d.
\end{equation}

For $M_1, M_2\in \SL_d(\R)$ and $\vecv_1, \vecv_2\in \R^d$,
the multiplication law on $\ASL_d(\R)$ is given by
\begin{equation}
(M_1, \vecv_1) \cdot (M_2, \vecv_2) = (M_1 M_2, M_1 \vecv_2 + \vecv_1).
\end{equation}
The discrete subgroup $\ASL_d(\Z)$ is defined similarly.

Let $\fg$ be the Lie algebra of $\ASL_d(\R)$, which we identify with $\mathfrak{sl}_d(\R) \oplus \R^d$.
We pick the following basis of $\fg$:
\begin{align}
& Y_{i, j} = (E_{i, j}, \vecnull), \quad 1 \leq i \ne j\leq d, \label{e:Yij}\\
& Y_i = (E_{i,i}-E_{1,1}, \vecnull), \quad i \ge 2, \label{e:Yi}\\
& X_i = (\vecnull, \vece_i), \quad 1\leq i \leq d, \label{e:Xi}
\end{align}
where $E_{i, j}\in \M_{d}(\R)$ has a $1$ at the $(i, j)$th entry and zeros elsewhere and the $\vece_i$ are the canonical basis of $\R^d$.

Each $(E, \vecy)\in \fg$ yields a left-invariant differential operator on a function on $\ASL_d(\R)$ in the following way:
\begin{equation}
((E, \vecy)F)(g, \vecx)
= \left.\frac{\partial}{\partial t} F((g, \vecx) \exp(tE, t\vecy)) \right|_{t=0}
= \left.\frac{\partial}{\partial t} F((g, \vecx) ((I_d, \vecnull)+t(E, \vecy))) \right|_{t=0}.
\end{equation}
In particular, for $X_{i_0}=(\vecnull, \vece_{i_0})$, $1\leq i_0 \leq d$, by the chain rule, we get
\begin{equation}\label{e:Ei0}
(X_{i_0} F)(g, \vecx)
= \sum_{i=1}^d g_{i, i_0} \left(\frac{\partial}{\partial x_{i}} F\right)(g, \vecx),
\end{equation}
where $g = (g_{i, j})_{1\leq i, j\leq d}$.

Let $\Cb_b^k(\ASL_d(\Z)\bsl \ASL_d(\R))$ denote the space of $k$ times continuously differentiable functions with all derivatives bounded.
For $F\in \Cb_b^k(\ASL_d(\Z)\bsl \ASL_d(\R))$ we set
\begin{equation}\label{e:FCbk}
\|F\|_{\Cb_b^k}
= \sum_{0\leq \ell \leq k} \sum_{\substack{E_i\in \{Y_{i_0, j_0}, X_{i_0}, Y_{i_0}\}\\ 1\leq i \leq \ell}}
\|E_1\circ \cdots\circ E_\ell F\|_{L^\infty}.
\end{equation}

Let $F$ be a function on $\ASL_{d}(\Z) \bsl \ASL_d(\R)$.
From now on, we implicitly identify functions on $\ASL_d(\Z) \bsl \ASL_d(\R)$
with $\ASL_d(\Z)$-invariant functions on $\ASL_d(\R)$.
For any $\vecm\in \Z^d$, \begin{equation}
F(A, \vecx+\vecm) = F((I_d, \vecm)(A, \vecx)) = F(A, \vecx)
\end{equation}
for $(A, \vecx)\in \ASL_{d}(\R)$.
So we have the following Fourier expansion of $F$:
\begin{equation}\label{e:fourier_F}
F(A, \vecx) = \sum_{\vecm\in \Z^d} \widehat{F}(A, \vecm) e^{2\pi \i \trans \vecm \vecx},
\end{equation}
where
\begin{equation}\label{e:wtF}
\widehat{F}(A, \vecm)
=
\int_{(\R/\Z)^d} F(A, \vect) e^{-2\pi \i \trans \vecm \vect}\; d \vect
\end{equation}
Here $d\vect$ denotes integration with respect to the Haar measure induced by the Lebesgue measure on $\R^d$.

\begin{lemma} \label{lem:FhatSLd}
For any $\gamma\in \SL_{d}(\Z)$ we have
\begin{equation}\label{e:whF_relation}
\widehat{F}(\gamma A, \vecm) = \widehat{F}(A, \trans \gamma \vecm).
\end{equation}
In particular, when $\vecm=\vecnull$, $\widehat{F}(A, \vecnull)$ is an automorphic function on $\SL_d(\Z)\bsl \SL_d(\R)$.
\end{lemma}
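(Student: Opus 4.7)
The plan is to exploit the left $\ASL_d(\Z)$-invariance of $F$ together with a change of variables in the Fourier integral \eqref{e:wtF}. Specifically, for any $\gamma \in \SL_d(\Z)$, I would view $\gamma$ as the element $(\gamma, \vecnull) \in \ASL_d(\Z)$. Using the multiplication law recalled at the start of the section, one computes
\beq
(\gamma, \vecnull) \cdot (A, \vect) = (\gamma A, \gamma \vect),
\eeq
so the $\ASL_d(\Z)$-invariance of $F$ gives the identity $F(\gamma A, \gamma \vect) = F(A, \vect)$ for all $\vect \in \R^d$.

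Starting from the definition \eqref{e:wtF}, I would then write
\beq
\widehat{F}(\gamma A, \vecm) = \int_{(\R/\Z)^d} F(\gamma A, \vect)\, e^{-2\pi \i \, \trans \vecm \, \vect}\, d\vect
\eeq
and perform the substitution $\vect = \gamma \vecu$. Since $\gamma \in \SL_d(\Z)$ induces a measure-preserving bijection of the torus $(\R/\Z)^d$ (it has integer entries and determinant $\pm 1$), the change of variables is legitimate, $d\vect = d\vecu$, and the domain of integration is unchanged. The exponential transforms via $\trans \vecm \, \gamma \vecu = \trans(\trans \gamma\, \vecm)\, \vecu$, and $F(\gamma A, \gamma \vecu) = F(A, \vecu)$ by the invariance above. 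Combining these gives
\beq
\widehat{F}(\gamma A, \vecm) = \int_{(\R/\Z)^d} F(A, \vecu)\, e^{-2\pi \i \, \trans(\trans\gamma\, \vecm) \, \vecu}\, d\vecu = \widehat{F}(A, \trans\gamma\, \vecm),
\eeq
which is \eqref{e:whF_relation}.

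For the final assertion, specialising $\vecm = \vecnull$ yields $\widehat{F}(\gamma A, \vecnull) = \widehat{F}(A, \vecnull)$ for every $\gamma \in \SL_d(\Z)$, so $A \mapsto \widehat{F}(A, \vecnull)$ descends to a function on $\SL_d(\Z) \bsl \SL_d(\R)$. There is no real obstacle here; the only minor points to verify carefully are that the torus substitution is well-defined (which follows from $\gamma \in \GL_d(\Z)$) and that the $\ASL_d(\Z)$-invariance of $F$ applied to $(\gamma, \vecnull)$ produces exactly the relation $F(\gamma A, \gamma \vecu) = F(A, \vecu)$ rather than something involving a translation.
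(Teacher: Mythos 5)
Your proof is correct and follows essentially the same route as the paper: both use the left $\ASL_d(\Z)$-invariance of $F$ under $(\gamma,\vecnull)$ together with the measure-preserving change of variables $\vect\mapsto\gamma\vect$ on $(\R/\Z)^d$, differing only in the order in which the invariance and the substitution are applied. Nothing is missing.
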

\begin{proof}
For any $\gamma\in \SL_d(\Z)$, we get
\begin{multline}
\widehat{F}(\gamma A, \vecm)
= \int_{(\R/\Z)^d} F(\gamma A, \vect) e^{-2\pi \i \trans \vecm \vect}\; d\vect
= \int_{(\R/\Z)^d} F((\gamma, \vecnull)(A, \gamma^{-1}\vect)) e^{-2\pi \i \trans \vecm \vect}\; d\vect
\\ =
\int_{(\R/\Z)^d} F(A, \vect) e^{-2\pi \i \trans (\trans \gamma \vecm) \vect}\; d\vect
= \widehat{F}(A, \trans\gamma \vecm).
\end{multline}
Here in the third identity we use the fact that $F$ is  left  $\ASL_d(\Z)$-invariant
and $\vect \mapsto \gamma \vect$ is a diffeomorphism of $(\R/\Z)^d$ preserving the volume measure $d\vect$.
\end{proof}

 Set $A=(a_{i, j})_{1\leq i, j\leq d}$.
For each $1\leq i_0 \leq d$, by applying integration by parts, we get
\begin{align}
\widehat{(X_{i_0}F)}(A, \vecm)
& = \int_{(\R/\Z)^d} (X_{i_0}F) (A, \vect) e^{-2\pi \i \trans \vecm \vect} \; d\vect
\\ & = \sum_{i=1}^d a_{i, i_0} \int_{(\R/\Z)^d} \frac{\partial}{\partial t_i} F(A, \vect) e^{-2\pi \i \trans \vecm \vect} \; d\vect
\nonumber \\ & =
\left(\sum_{i=1}^d a_{i, i_0} 2\pi \i m_i \right)\int_{(\R/\Z)^d} F(A, \vect) e^{-2\pi \i \trans \vecm \vect} \; d\vect
\nonumber \\ & = 2\pi \i \left(\sum_{i=1}^d m_i a_{i, i_0} \right) \widehat{F}(A, \vecm).
\nonumber
\end{align}
So for $k\in \Z_{\geq 1}$,
\begin{equation}
\int_{(\R/\Z)^d} (X_{i_0}^k F)(A, \vect) e^{-2\pi \i \trans \vecm \vect} \; d\vect
=
\left(2\pi \i \sum_{i=1}^d m_i a_{i, i_0} \right)^k \widehat{F}(A, \vecm),
\end{equation}
and we get
\begin{equation}
(2\pi)^k \left|\sum_{i=1}^d m_i a_{i, i_0} \right|^k \left|\widehat{F}(A, \vecm)\right|
\leq \int_{(\R/\Z)^d} \left|(X_{i_0}^k F)(A, \vect)\right| \; d\vect \leq \|X_{i_0}^k F\|_\infty.
\end{equation}
For $\vecb\in \R^d$, let $\|\vecb\|_\infty := \max_{\{1\leq i \leq d\}} \{|b_i|\}$.
Then
\begin{equation}
\max_{1\leq i_0\leq d} \left\{\left|\sum_{i=1}^d m_i a_{i, i_0} \right|^k\right\} = \|\trans A\vecm\|_{\infty}^k,
\end{equation}
and we have
\begin{equation}
(2\pi \|\trans A\vecm\|_{\infty})^k \left|\widehat{F}(A, \vecm)\right|
= \max_{1\leq i_0 \leq d} (2\pi)^k \left|\sum_{i=1}^d a_{i, i_0} m_{i}\right|^k
\left|\widehat{F}(A, \vecm)\right|
\leq \max_{1\leq i_0\leq d} \|X_{i_0}^k F\|_{\infty}
\leq \|F\|_{\Cb_b^k}.
\end{equation}
So for $\vecm\neq\vecnull$, we have
\begin{equation}\label{e:bound_FC_F}
\left|\widehat{F}(A, \vecm)\right| \leq \frac{\|F\|_{\Cb_b^k}}{(2\pi \|\trans A\vecm\|_{\infty})^k}.
\end{equation}

\section{Proof of the main theorem}\label{sec:main}
In this section, we prove \autoref{thm:main}.

For $f\in \Cb_b^k(\Gamma\bsl \Gamma {\Hb} \times (\R/\Z)^{d})$,
 using the fact that
$\Gamma \backslash \Gamma {\Hb}$ is diffeomorphic to $\ASL_d(\Z) \backslash \ASL_d(\R)$, we set, similarly to \eqref{e:FCbk},
\begin{equation}\label{e:fCbk}
\|f\|_{\Cb_b^k}
=\sum_{0\leq \ell \leq k} \sum_{\substack{E_i\in \{Y_{i_0, j_0}, X_{i_0}, Y_{i_0}\} \\ 1\leq i \leq \ell}}
\sum_{\substack{\ell_1, \ldots, \ell_d\geq 0, \\ \ell_1+\cdots +\ell_d +\ell \leq k}}
\left\|E_1\circ\cdots\circ E_\ell \frac{\partial^{\ell_1}}{\partial x_1^{\ell_1}} \cdots \frac{\partial^{\ell_d}}{\partial x_d^{\ell_d}}
f\right\|_{L^\infty}.
\end{equation}

We have the Fourier expansion
\begin{equation}
f(g, \vecx) = \sum_{\vecn\in \Z^d} \widehat{f_\vecn}(g) e^{2\pi \i \trans \vecn \vecx}
\end{equation}
where
\begin{equation}\label{eqn:hat-f}
\widehat{f_{\vecn}}(g)
= \int_{(\R/\Z)^d} f(g, \vecx) e^{-2\pi \i \trans \vecn \vecx} \; d\vecx.
\end{equation}
By using integration by parts repeatedly, for $\vecn\neq\vecnull$, we have
\begin{equation}\label{eqn:sup-f}
  \sup_{g\in \Gamma\backslash \Gamma {\Hb}} \Big|\widehat{f_{\vecn}}(g)\Big|
  \ll_k \|f\|_{\Cb_b^k} \|\vecn\|_\infty^{-k}.
\end{equation}

By  \autoref{lem:emss_parameter},
we have (recall that $\scrB_q$ is a set of representatives for $\Gamma_{0, d}(q)\bsl \SL_d(\Z)$):
\begin{align}
\frac 1{\#\cR_q} \sum_{\vecr \in \cR_q} & f \left(\Gamma n_+\left(\frac 1q \vecr \right) D(q), \frac 1q \vecr \right)
\\ & = \frac{1}{\#\cR_q} \sum_{\gamma\in \mathcal B_q}
\sum_{u\in (\Z/q\Z)^\times}
f\left(\bpm q^{-1+\frac{1}{d}} B_0 \gamma & q^{-1} \overline{u} \vece_d\\ \trans \vecnull & 1\epm,
q^{-1} u \trans\gamma \vece_d\right)
\nonumber \\ &= \frac{1}{\#\cR_q} \sum_{\gamma\in \mathcal B_q}
\sum_{u\in (\Z/q\Z)^\times} \sum_{\vecn\in \Z^d}
\widehat{f_\vecn} \left(\bpm q^{-1+\frac{1}{d}} B_0 \gamma & q^{-1} \overline{u} \vece_d\\ \trans \vecnull & 1\epm \right)
e^{2\pi \i \frac{ \trans \vecn u\trans\gamma \vece_d}{q}}.
\nonumber
\end{align}

We first note that we can truncate $\vecn$-sum at $\|\vecn\|_\infty\leq q^{\vartheta_1}$ for some small  $0<\vartheta_1<\frac{1}{2}$. Indeed, by \eqref{eqn:sup-f}, we know that the contribution from the terms with $\|\vecn\|_\infty > q^{\vartheta_1}$ is
\begin{equation}\label{eqn:error-n>}
  \ll_k \frac{\|f\|_{\Cb_b^k}}{\#\cR_q} \sum_{\gamma\in \mathcal B_q}
  \sum_{u\in (\Z/q\Z)^\times}
  \sum_{\substack{\vecn\in \Z^d\\ \|\vecn\|_\infty>q^{\vartheta_1}}}
  \frac 1{\|\vecn\|_\infty^k}
  \ll_k \|f\|_{\Cb_b^k}
  \sum_{\substack{\vecn\in \Z^d\\ \|\vecn\|_\infty>q^{\vartheta_1}}}
  \frac 1{\|\vecn\|_\infty^k}.
\end{equation}
Note that $\|\vecn\|_\infty \leq \|\vecn\|_2 \leq \sqrt{d}\|\vecn\|_\infty$.
 It is a standard fact that
\begin{equation} \label{Gauss}
  \sum_{\substack{\vecn\in \Z^d \\ \|\vecn\|_\infty > q^{{\vartheta_1}}}}
  \frac{1}{\|\vecn\|_\infty^{k}}
  \ll_{d,k}
  (q^{{\vartheta_1}})^{d-k}
  = q^{-{\vartheta_1}(k-d)}.
\end{equation}

So we have
\begin{multline}\label{eqn:sumr2sumn}
  \frac 1{\#\cR_q} \sum_{\vecr \in \cR_q} f \left(\Gamma n_+\left(\frac 1q \vecr \right) D(q), \frac 1q \vecr \right)
  \\
  =
  \frac{1}{\#\cR_q} \sum_{\gamma\in \mathcal B_q}
  \sum_{u\in (\Z/q\Z)^\times} \sum_{\substack{\vecn\in \Z^d \\ \|\vecn\|_\infty\leq q^{\vartheta_1}}}
  \widehat{f_\vecn} \left(\bpm q^{-1+\frac{1}{d}} B_0 \gamma & q^{-1} \overline{u} \vece_d\\ \trans \vecnull & 1\epm \right)
  e^{2\pi \i \frac{ \trans \vecn u\trans\gamma \vece_d}{q}} \\
  + O_{d, k}(\| f \|_{\Cb_b^{k}} q^{-\vartheta_1 (k-d)}).
\end{multline}

For $A\in \SL_d(\R)$ and $\vecy\in \R^d$, let
\begin{equation}\label{eqn:Fn-def}
F_\vecn(A, \vecy) = \widehat{f_{\vecn}}\left(\bpm A& \vecy\\ \trans \vecnull & 1\epm \right).
\end{equation}
Then $F_\vecn$ is a function on $\ASL_{d}(\Z) \bsl \ASL_{d}(\R)$
and as such has the following Fourier expansion
\begin{equation}\label{e:Fourier_F}
F_{\vecn}(A, \vecy) = \sum_{\vecm\in \Z^d} \widehat{F_{\vecn}}(A, \vecm) e^{2\pi \i \trans \vecm \vecy}.
\end{equation}
Here
\begin{equation}\label{eqn:hat-Fn}
\widehat{F_{\vecn}}(A, \vecm) = \int_{(\R/\Z)^d} F_{\vecn}(A, \vect) e^{-2\pi \i \trans \vecm \vect} \; d\vect.
\end{equation}
Recall that $B_0 = \sm q I_{d-1} & \\ & 1\esm$.
By \eqref{eqn:sumr2sumn} and \eqref{e:Fourier_F}, we get
\begin{multline}\label{eqn:sumr2sumnm}
\frac 1{\#\cR_q} \sum_{\vecr \in \cR_q} f \left(\Gamma n_+\left(\frac 1q \vecr \right) D(q), \frac 1q \vecr \right)
\\ =
\frac{1}{\#\cR_q} \sum_{\vecn\in \Z^d} \sum_{\vecm\in \Z^d} \sum_{\gamma\in \mathcal{B}_q}
\widehat{F_{\vecn}}\left(q^{-1+\frac{1}{d}} B_0 \gamma, \vecm\right)
\sum_{u\in (\Z/q\Z)^\times} e^{2\pi \i \frac{\trans \vecm \overline{u} \vece_d + \trans \vecn u\trans\gamma \vece_d}{q}}
\\ =
\frac{1}{\#\cR_q} \sum_{\substack{\vecn\in \Z^d \\ \|\vecn\|_\infty\leq q^{\vartheta_1}}} \sum_{\vecm\in \Z^d} \sum_{\gamma\in \mathcal{B}_q}
\widehat{F_{\vecn}}\left(q^{-1+\frac{1}{d}} B_0 \gamma, \vecm\right)
S(m_d, \trans \vecn \trans \gamma \vece_d; q)
 + O_{d, k}(\| f \|_{\Cb_b^{k}} q^{-\vartheta_1 (k-d)}).
\end{multline}
Here $S(a,b;q)=\sum_{u\in(\mathbb{Z}/q\mathbb{Z})^\times} e^{2\pi \i \frac{a \bar u + bu}{q}}$
is the classical Kloosterman sum.

Note that by \eqref{e:bound_FC_F},  for $\vecm\neq \vecnull$, we have
\begin{equation}\label{e:bound_hatFn}
\left|\widehat{F_\vecn}\left(q^{-1+\frac{1}{d}} B_0 \gamma, \vecm\right)\right|
\leq \frac{\|F\|_{\Cb_b^k}}{(2\pi \|q^{-1+\frac{1}{d}} \trans\gamma B_0 \vecm\|_{\infty})^k}.
\end{equation}

We bound \eqref{eqn:sumr2sumnm} by considering the following four distinct cases in the subsequent four propositions:
\begin{itemize}
\item $\vecm=\vecn=\vecnull$;
\item $\vecn\neq \vecnull$ and $\vecm=\vecnull$;
\item $\vecn, \vecm\neq \vecnull$ and $\trans\gamma q^{-1+\frac{1}{d}}B_0\vecm$ ``small";
\item $\vecn, \vecm\neq \vecnull$ and $\trans\gamma q^{-1+\frac{1}{d}} B_0 \vecm$ ``large".
\end{itemize}
We prove these propositions in the subsections below, from which \autoref{thm:main} follows as we explain after the statements of the propositions.

\begin{proposition}\label{prop:mainterm}
For every $\epsilon>0$  and every integer $k \geq d^2$, we have
\begin{multline}
\frac{1}{\#\cR_q} \sum_{\gamma\in \mathcal{B}_q}
\widehat{F_{\vecnull}}(q^{-1+\frac{1}{d}}B_0 \gamma, \vecnull)
S(0, 0; q)
\\ =
\int_{\SL_d(\Z) \bsl \SL_d(\R)} \widehat{F_{\vecnull}}(A, \vecnull) \; d\mu(A)
+ O_{\epsilon}(\|\widehat{F_{\vecnull}}(*, \vecnull) \|_{\Cb_b^k} q^{-\frac{1}{2}+\epsilon}).
\end{multline}
\end{proposition}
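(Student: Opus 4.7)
The strategy is to recognize the left-hand side as a Hecke-operator average and apply the effective equidistribution of Hecke orbits due to Clozel, Oh and Ullmo. Using $S(0,0;q) = \varphi(q)$ together with $\#\cR_q = \#\mathcal{B}_q \cdot \varphi(q)$, the left-hand side simplifies to
\[
\frac{1}{\#\mathcal{B}_q} \sum_{\gamma \in \mathcal{B}_q} \psi(\Gamma a(q) \gamma),
\]
where $a(q) := q^{-1+1/d} B_0 = \diag(q^{1/d}, \ldots, q^{1/d}, q^{-1+1/d}) \in \SL_d(\R)$ and $\psi(g) := \widehat{F_{\vecnull}}(g, \vecnull)$, which descends to a function on $\Gamma \backslash \SL_d(\R)$ by \autoref{lem:FhatSLd}.

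Next I identify this as a Hecke orbit average. A direct entry-wise computation using $(a(q) \gamma a(q)^{-1})_{ij} = \gamma_{ij} a(q)_{ii}/a(q)_{jj}$ shows that $\Gamma \cap a(q)^{-1} \Gamma a(q) = \Gamma_{0,d}(q)$: the integrality condition reduces to $\gamma_{d,j} \equiv 0 \pmod q$ for all $j < d$. By standard Hecke theory, the map $\gamma \mapsto \Gamma a(q) \gamma$ is then a bijection from $\Gamma_{0,d}(q) \backslash \Gamma$ onto the Hecke orbit $\Gamma \backslash \Gamma a(q) \Gamma$, and the sum equals $(\deg T_{a(q)})^{-1} (T_{a(q)} \psi)(\Gamma)$ with $\deg T_{a(q)} = [\Gamma : \Gamma_{0,d}(q)] \asymp q^{d-1}$ by \autoref{prop:Gamma0_index}.

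Decomposing $\psi = \bar\psi + \psi_0$ into its mean $\bar\psi := \int_{\Gamma \backslash \SL_d(\R)} \widehat{F_{\vecnull}}(g, \vecnull) \, d\mu(g)$ (which yields exactly the right-hand-side main term) and a mean-zero part $\psi_0 \in L^2_0(\Gamma \backslash \SL_d(\R))$, the theorem of Clozel, Oh and Ullmo \cite{ClozelOhUllmo2001}---which for $d \geq 3$ leverages Oh's \cite{Oh2002} uniform property $(T)$ for $\SL_d(\Q_p)$---gives the operator bound $\|T_{a(q)}\|_{L^2_0 \to L^2_0} \ll (\deg T_{a(q)})^{1/2}$. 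Since $T_{a(q)}$ commutes with right-invariant differential operators, this bound propagates to all Sobolev norms; combined with the Sobolev embedding $H^k \hookrightarrow L^\infty$ on $\Gamma \backslash \SL_d(\R)$ (valid for $k > \dim \SL_d(\R)/2 = (d^2-1)/2$, easily within the hypothesis $k \geq d^2$) and the natural estimate $\|\psi_0\|_{H^k} \ll \|F_{\vecnull}\|_{\Cb_b^k}$, we obtain
\[
\left| (\deg T_{a(q)})^{-1} (T_{a(q)} \psi_0)(\Gamma) \right| \ll q^{-(d-1)/2} \|F_{\vecnull}\|_{\Cb_b^k},
\]
which for $d \geq 3$ is $\ll q^{-1} \|F_{\vecnull}\|_{\Cb_b^k}$, comfortably within the claimed $q^{-1/2+\epsilon}$ rate.

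The main technical obstacle is the passage from the $L^2$ operator norm of Clozel-Oh-Ullmo to a pointwise evaluation at the identity coset, which requires Sobolev embedding on the non-compact quotient $\Gamma \backslash \SL_d(\R)$ and careful tracking of $\|\psi_0\|_{H^k}$ in terms of $\|F_{\vecnull}\|_{\Cb_b^k}$ through the integral defining $\widehat{F_{\vecnull}}$. A secondary subtlety is that $a(q) \notin \SL_d(\Q)$; however, since $a(q)$ differs from the integer matrix $B_0 \in \M_d(\Z)$ only by the scalar $q^{-(d-1)/d}$, the Hecke correspondence $\Gamma a(q) \Gamma$ coincides with the classical Hecke correspondence indexed by $B_0$, to which the Clozel-Oh-Ullmo bound directly applies.
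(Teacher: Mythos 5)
Your overall strategy is the same as the paper's: use $S(0,0;q)=\varphi(q)$ and $\#\cR_q=\#\mathcal B_q\cdot\varphi(q)$ to rewrite the sum as the normalized Hecke operator attached to $B_0$ applied to $\widehat{F_{\vecnull}}(\cdot,\vecnull)$ and evaluated at the identity coset (your conjugation computation identifying the stabilizer with $\Gamma_{0,d}(q)$ is the paper's coset-decomposition lemma), and then invoke Clozel--Oh--Ullmo. However, the key quantitative input is misquoted. You claim $\|T_{a(q)}\|_{L^2_0\to L^2_0}\ll(\deg T_{a(q)})^{1/2}$, i.e.\ a saving of $q^{-(d-1)/2}$ after normalization. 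That is a Ramanujan-quality (tempered) bound: for the element $\diag(q,\dots,q,1)$ the tempered spherical-function decay is $\asymp q^{-(d-1)/2+\epsilon}$, and no such bound is known for $\GL_d$ with $d\ge 3$. What Clozel--Oh--Ullmo actually provide for $d\ge3$, via Oh's uniform property $(T)$ (which for this singular direction yields essentially a single $\SL_2$-type Harish--Chandra factor, not the full $\SL_d$ one), is the $L^2$ estimate with saving $q^{-1/2+\epsilon}$ --- precisely the exponent appearing in the proposition, and the reason the error term is $q^{-1/2+\epsilon}$ rather than anything stronger (compare the paper's Remark 1.3, where for $d=2$ the exponent is governed by the Ramanujan bound $\theta$). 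Since the correct, weaker bound still suffices for the stated proposition, your conclusion is recoverable, but as written the proof rests on an operator-norm bound that the cited reference does not give, and your claimed rate $q^{-(d-1)/2}$ is unsupported.

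A second, fixable, gap is the passage from $L^2$ to the pointwise value. The global embedding $H^k\hookrightarrow L^\infty$ fails on the non-compact finite-volume quotient $\SL_d(\Z)\bsl\SL_d(\R)$: a smooth bump supported high in the cusp has all invariant Sobolev norms tending to $0$ while its sup norm is $1$, so "the $L^2$ bound propagates to Sobolev norms, then embed into $L^\infty$" is not literally valid. What rescues the argument is that you only need the value at the fixed point $I_d$, which lies in a fixed compact part: a local Sobolev estimate on a ball of fixed radius around the identity coset, combined with the fact that the Hecke operator commutes with the left-invariant differential operators and that derivatives of mean-zero functions remain mean-zero, gives the pointwise bound. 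This is exactly the content of \cite[Proposition~8.2]{ClozelUllmo2004}, which the paper invokes and which is where the hypothesis $k\ge d^2$ enters; your condition $k>(d^2-1)/2$ is not what that route delivers without further argument. With these two corrections (the $q^{-1/2+\epsilon}$ input and the local, fixed-point Sobolev step), your proof coincides with the paper's.
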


\begin{proposition}\label{prop:E1}
For each $\vecnull\neq \vecn\in \Z^d$ with $\|\vecn\|_\infty \leq q^{\vartheta_1}$, we have
\begin{equation}
\mathcal E_1
= \frac{1}{\#\cR_q}
\left| \sum_{\gamma\in \mathcal{B}_q}
\widehat{F_{\vecn}}\left(q^{-1+\frac{1}{d}} B_0 \gamma, \vecnull\right)
S(0, \trans\vecn\trans\gamma \vece_d; q)\right|
\\ \ll_\epsilon \|F\|_{\Cb_b^0} q^{-1+\vartheta_1+\epsilon}.
\end{equation}
\end{proposition}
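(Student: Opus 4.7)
The plan is to exploit two elementary facts: the Kloosterman sum $S(0,\cdot;q)$ collapses to a Ramanujan sum $c_q(\cdot)$, which satisfies the bound $|c_q(b)| \leq \gcd(b,q)$; and the parametrisation of $\cR_q$ from \autoref{lem:qprimitive_gamma} translates a sum over $\mathcal B_q$ into one over primitive residues, which is amenable to divisor counting.

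First I would take the trivial estimate $|\widehat{F_\vecn}(A, \vecnull)| \leq \|F_\vecn\|_\infty \leq \|F\|_{\Cb_b^0}$, which follows immediately from the integral representation \eqref{eqn:hat-Fn}, and combine it with $|S(0, \trans\vecn\trans\gamma\vece_d; q)| \leq \gcd(\trans\vecn\trans\gamma\vece_d, q)$. These two bounds reduce the problem to controlling
\begin{equation*}
\Sigma_\vecn := \sum_{\gamma \in \mathcal B_q} \gcd(\trans\vecn \trans\gamma \vece_d,\, q).
\end{equation*}

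To handle $\Sigma_\vecn$, I would invoke the bijection $\mathcal B_q \times (\Z/q\Z)^\times \to \cR_q$ sending $(\gamma, u) \mapsto u \trans\gamma\vece_d \pmod q$ from \autoref{lem:qprimitive_gamma}. Since $\gcd(u\alpha, q) = \gcd(\alpha, q)$ whenever $\gcd(u,q)=1$, averaging the summand over $u \in (\Z/q\Z)^\times$ gives
\begin{equation*}
\Sigma_\vecn = \frac{1}{\varphi(q)} \sum_{\vecr \in \cR_q} \gcd(\trans\vecn \vecr,\, q) \leq \frac{1}{\varphi(q)} \sum_{\vecr \in (\Z/q\Z)^d} \gcd(\trans\vecn\vecr,\, q).
\end{equation*}
The last sum is standard: applying the identity $\gcd(m,q) = \sum_{e \mid \gcd(m,q)} \varphi(e)$, swapping the orders of summation, and computing the size of the kernel of the linear map $\vecr \mapsto \trans\vecn\vecr \pmod e$ on $(\Z/q\Z)^d$ (which has image of size $e/\gcd(\vecn,e)$), one obtains a bound of the form $q^d \sum_{e \mid q} \varphi(e)\gcd(\vecn, e)/e$. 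A further divisor manipulation controls this by $q^d \tau(q) \gcd(\gcd(\vecn), q)$.

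The conclusion then follows from the hypothesis $\vecn \neq \vecnull$, which guarantees $\gcd(\vecn)$ is a positive integer satisfying $\gcd(\gcd(\vecn), q) \leq \gcd(\vecn) \leq \|\vecn\|_\infty \leq q^{\vartheta_1}$, together with the standard bounds $\tau(q) \ll_\epsilon q^\epsilon$, $\varphi(q) \gg_\epsilon q^{1-\epsilon}$, and $\#\cR_q \gg q^d$ from \eqref{e:Rqlbd}. Assembling these estimates yields $\mathcal E_1 \ll_\epsilon \|F\|_{\Cb_b^0} q^{-1+\vartheta_1+\epsilon}$. In contrast to \autoref{prop:mainterm}, which draws on deep input from \cite{ClozelOhUllmo2001}, this proposition is genuinely elementary; there is no real obstacle once the Ramanujan sum interpretation and the parametrisation of \S\ref{sec:ratpts} have been set up, and the only mildly delicate point is ensuring that the dependence on $\vecn$ enters through $\gcd(\vecn)$ rather than through a larger quantity.
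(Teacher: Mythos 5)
Your argument is correct, and its skeleton coincides with the paper's: the trivial bound $|\widehat{F_\vecn}(\,\cdot\,,\vecnull)|\le\|F\|_{\Cb_b^0}$, the Ramanujan-sum bound $|S(0,b;q)|\le\gcd(b,q)$, and the parametrisation of \autoref{lem:qprimitive_gamma} to control $\sum_{\gamma\in\mathcal B_q}\gcd(\trans\vecn\trans\gamma\vece_d,q)$, followed by $\#\cR_q\gg q^d$, $\varphi(q)\gg_\epsilon q^{1-\epsilon}$ and $\sigma_0(q)\ll_\epsilon q^\epsilon$. Where you genuinely diverge is in the counting step. The paper stratifies $\mathcal B_q$ by the value $\ell=\gcd(\trans\vecn\trans\gamma\vece_d,q)$, fixes a coordinate $j_0$ with $n_{j_0}\neq 0$, and counts last rows $\veca$ satisfying the resulting congruence $a_{j_0}\equiv\cdots\pmod{\ell/\gcd(n_{j_0},\ell)}$, using the injectivity of $(\gamma,u)\mapsto u\trans\gamma\vece_d$ to divide by $\varphi(q)$; the $\vecn$-dependence enters through $\gcd(n_{j_0},\ell)\le q^{\vartheta_1}$. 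You instead use that same bijection to rewrite the whole sum as $\frac1{\varphi(q)}\sum_{\vecr\in\cR_q}\gcd(\trans\vecn\vecr,q)$, drop the primitivity condition, and evaluate the complete sum via $\gcd(m,q)=\sum_{e\mid\gcd(m,q)}\varphi(e)$ and the kernel count for $\vecr\mapsto\trans\vecn\vecr\pmod e$, so that the dependence enters through $\gcd(\vecn)\le\|\vecn\|_\infty\le q^{\vartheta_1}$. Both routes yield the same bound $q^{-1+\vartheta_1+\epsilon}$; yours is arguably cleaner, replacing the paper's coordinatewise congruence analysis (the sets $\scrS_\ell$, $\scrA_{\ell_0}$) by a standard divisor-sum computation, at the mild cost of implicitly using the full bijectivity statement of \autoref{lem:qprimitive_gamma} (which the paper does establish) to justify the exact averaging identity rather than a one-sided inequality.
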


\begin{proposition}\label{prop:E2}
For each $\vecn\in \Z^d$ with $\|\vecn\|_\infty \leq q^{\vartheta_1}$ and $0<\vartheta_2<\frac{1}{2d}$, we have
\begin{multline}
\mathcal{E}_2
=\frac{1}{\#\cR_q}
\left|\sum_{\substack{\vecm\in \Z^d\setminus\{\vecnull\}, \\ \|\trans \gamma q^{-1+\frac{1}{d}} B_0 \vecm\|_\infty \leq q^{\vartheta_2}}}
\sum_{\gamma\in \scrB_q}
\widehat{F_{\vecn}}\left(q^{-1+\frac{1}{d}} B_0 \gamma, \vecm\right)
S(m_d, \trans\vecn\trans\gamma \vece_d; q)\right|
\\ \leq \|F\|_{\Cb_b^0} q^{-\frac{1}{2}+ d\vartheta_2}
\frac{\sigma_{0}(q)^2}{\prod_{p\mid q}(1-p^{-1})}.
\end{multline}
 Here $\sigma_0(q)$ is the number of positive divisors of $q$.
\end{proposition}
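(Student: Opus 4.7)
The approach combines Weil's bound for the Kloosterman sum with a careful lattice-point count, exploiting the structure of $\trans\gamma B_0 \Z^d$ revealed by the parametrisation of \S\ref{sec:ratpts}.

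First, I apply Weil's bound in the weakened form $|S(m_d, b; q)| \leq \sigma_0(q) q^{1/2} \gcd(m_d, q)^{1/2}$ (discarding $b$ from the gcd removes the dependence on $\vecn$ and $\gamma$) together with the trivial sup-norm estimate $|\widehat{F_{\vecn}}(A, \vecm)| \leq \|F\|_{\Cb_b^0}$. This reduces $\mathcal{E}_2$ to
\beq
\mathcal{E}_2 \leq \frac{\|F\|_{\Cb_b^0} \sigma_0(q) q^{1/2}}{\#\cR_q} \sum_{\gamma \in \mathcal{B}_q} \sum_{\substack{\vecm \in \Z^d \setminus \{\vecnull\} \\ \|\trans\gamma q^{-1+\frac{1}{d}} B_0 \vecm\|_\infty \leq q^{\vartheta_2}}} \gcd(m_d, q)^{1/2}.
\eeq

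The key observation is the lattice identity $\trans\gamma B_0 \Z^d = \Z \vecr_\gamma + q \Z^d$, where $\vecr_\gamma = \trans\gamma \vece_d$ is the last row of $\gamma$ (a primitive vector modulo $q$ by \autoref{lem:qprimitive_gamma}); a covolume check ($q^{d-1}$ on both sides) confirms the equality. Setting $\vecw = \trans\gamma B_0 \vecm$, any such $\vecw$ decomposes uniquely as $\vecw = \lambda \vecr_\gamma + q\vecv$ with $\lambda \in \{0,\ldots,q-1\}$ and $\vecv \in \Z^d$, and a direct computation using $B_0^{-1}$ and $\trans(\gamma^{-1})$ gives $m_d \equiv \lambda \pmod q$, hence $\gcd(m_d, q) = \gcd(\lambda, q)$. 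Since $\vartheta_2 < 1/d$ forces the box radius $q^{1 - 1/d + \vartheta_2}$ to be strictly smaller than $q$, the case $\lambda = 0$ with $\vecw \neq \vecnull$ cannot occur; so only $\lambda \in \{1,\ldots,q-1\}$ contributes.

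I then partition the sum by $\delta = \gcd(\lambda, q)$ and rescale: writing $\vecw' = \vecw/\delta$, the constraint becomes $\vecw' \in \Z \vecr_\gamma + (q/\delta)\Z^d$ with $\|\vecw'\|_\infty \leq q^{1 - 1/d + \vartheta_2}/\delta$. Interchanging the order of summation, for each nonzero $\vecw'$ in the box I count cosets $[\gamma] \in \mathcal{B}_q$ such that $\vecw'$ lies modulo $q/\delta$ in the cyclic subgroup generated by $\vecr_\gamma$. Applying \autoref{lem:qprimitive_gamma} at level $q/\delta$ identifies $\mathcal{B}_{q/\delta}$ with primitive lines modulo $q/\delta$; combined with the fibre size $|\mathcal{B}_q|/|\mathcal{B}_{q/\delta}|$ of the reduction map $\mathcal{B}_q \to \mathcal{B}_{q/\delta}$, standard divisor-type estimates then produce the expected volume-divided-by-covolume bound $\sum_\gamma N_\gamma(\delta) \ll_d |\mathcal{B}_q| \cdot q^{d\vartheta_2}/\delta$, up to divisor-function corrections absorbed into the final $\sigma_0(q)$ factors.

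Finally, $\sum_{\delta \mid q} \delta^{1/2} \cdot q^{d\vartheta_2}/\delta \leq q^{d\vartheta_2} \sigma_{-1/2}(q) \leq q^{d\vartheta_2} \sigma_0(q)$; combining with the Weil factor $\sigma_0(q) q^{1/2}$ and the identity $|\mathcal{B}_q|/\#\cR_q = 1/\varphi(q) = q^{-1}/\prod_{p \mid q}(1-p^{-1})$ gives the asserted bound. The principal obstacle is the lattice-counting step: the lattice $\trans\gamma q^{-1+1/d} B_0 \Z^d$ has covolume $1$ but a strongly $\gamma$-dependent shape, so pointwise the number of lattice points in the box can greatly exceed the volume-to-covolume ratio. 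It is the parametrisation of $\mathcal{B}_q$ by primitive vectors modulo $q$ (\autoref{lem:qprimitive_gamma}) that allows us to recover the correct estimate on average by summing over $\gamma$ first.
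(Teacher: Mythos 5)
Your proposal is correct and follows essentially the same route as the paper: after the trivial bound on $\widehat{F_\vecn}$ and Weil's bound in the form $\sqrt q\,\gcd(m_d,q)^{1/2}\sigma_0(q)$, you pass to the image vector $\vecw=\trans\gamma B_0\vecm$, sort by $\delta=\gcd(m_d,q)$, and bound, for each fixed rescaled target vector in the box, the number of compatible classes $\gamma\in\mathcal B_q$ by $[\Gamma_{0,d}(q/\delta):\Gamma_{0,d}(q)]$ --- exactly the paper's estimate $\#\scrS_\ell(\vecx)\le[\Gamma_{0,d}(q/\ell):\Gamma_{0,d}(q)]$, which you derive from \autoref{lem:qprimitive_gamma} at modulus $q/\delta$ together with the fibre size of the projection $\mathcal B_q\to\mathcal B_{q/\delta}$ rather than the paper's direct coset computation. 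The final divisor-sum assembly and the identity $\#\cR_q=\#\mathcal B_q\,\varphi(q)$ are used in the same way, so the two arguments coincide in substance.
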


\begin{proposition}\label{prop:E3}
For each $\vecn\in \Z^d$ with $\|\vecn\|_\infty \leq q^{\vartheta_1}$ and $0<\vartheta_2<\frac{1}{2d}$, we have
\begin{multline}
\mathcal E_3
= \frac{1}{\#\cR_q}
\left|\sum_{\substack{\vecm\in \Z^d\setminus\{\vecnull\}, \\ \|\trans \gamma q^{-1+\frac{1}{d}} B_0 \vecm\|_\infty > q^{\vartheta_2}}}
\sum_{\gamma\in \mathcal{B}_q}
\widehat{F_{\vecn}}\left(q^{-1+\frac{1}{d}} B_0 \gamma, \vecm\right)
S(m_d, \trans\vecn\trans\gamma \vece_d; q)\right|
\\ \ll_{d,k,\vartheta_2}  \|F\|_{\C_b^k} q^{-\frac{1}{2}+d\vartheta_2},
\end{multline}
  provided $k$ is an integer such that $k\geq \frac{2d-1}{2\vartheta_2}$.
\end{proposition}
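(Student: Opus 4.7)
My plan is to combine Weil's bound for the Kloosterman sum with the Fourier decay estimate \eqref{e:bound_hatFn}, and then bound the resulting inner sum over $\vecm$ for each fixed $\gamma\in\mathcal{B}_q$ by a simple lattice-tail estimate on $\Z^d$. First, Weil's bound $|S(a,b;q)|\le\tau(q)\gcd(a,b,q)^{1/2} q^{1/2}$ together with \eqref{e:bound_hatFn} and the trivial inequalities $\gcd(m_d,\trans\vecn\trans\gamma\vece_d,q)\le\gcd(m_d,q)\le q$ yields
\begin{equation*}
\mathcal{E}_3 \ll \frac{\|F\|_{\Cb_b^k}\,\tau(q)\,q}{\#\cR_q}\sum_{\gamma\in\mathcal{B}_q}\Sigma_\gamma,\qquad \Sigma_\gamma := \sum_{\substack{\vecm\in\Z^d\setminus\{\vecnull\}\\ \|q^{-1+\frac{1}{d}}\trans\gamma B_0\vecm\|_\infty>q^{\vartheta_2}}}\|q^{-1+\frac{1}{d}}\trans\gamma B_0\vecm\|_\infty^{-k}.
\end{equation*}

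For each fixed $\gamma\in\mathcal{B}_q$, the substitution $\vecN := \trans\gamma B_0\vecm$ is a bijection between $\Z^d$ and the sublattice $\Lambda_\gamma := \trans\gamma(q\Z^{d-1}\times\Z)\subset\Z^d$. Since $\Lambda_\gamma\subset\Z^d$, enlarging the sum and invoking the standard tail estimate \eqref{Gauss} (valid for $k > d$) gives
\begin{equation*}
\Sigma_\gamma \le q^{k(1-\frac{1}{d})}\sum_{\substack{\vecN\in\Z^d\setminus\{\vecnull\}\\ \|\vecN\|_\infty>q^{1-\frac{1}{d}+\vartheta_2}}}\|\vecN\|_\infty^{-k}\ll_{d,k} q^{k(1-\frac{1}{d})}\cdot q^{(1-\frac{1}{d}+\vartheta_2)(d-k)} = q^{d-1+\vartheta_2(d-k)}.
\end{equation*}
Combining this with trivial summation over $\gamma\in\mathcal{B}_q$ and using $\#\mathcal{B}_q\asymp q^{d-1}$ from \autoref{prop:Gamma0_index} and $\#\cR_q\asymp q^d$ from \eqref{e:Rqlbd}, one obtains $\mathcal{E}_3\ll_{d,k}\|F\|_{\Cb_b^k}\tau(q)\,q^{d-1+\vartheta_2(d-k)}$.

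The hypothesis $k \ge (2d-1)/(2\vartheta_2)$ is equivalent to $\vartheta_2 k \ge d-\tfrac{1}{2}$, which rearranges precisely to $d-1+\vartheta_2(d-k)\le -\tfrac{1}{2}+d\vartheta_2$; this delivers the claimed bound $\mathcal{E}_3\ll_{d,k,\vartheta_2}\|F\|_{\Cb_b^k}q^{-\frac{1}{2}+d\vartheta_2}$ after the $\tau(q)=q^{o(1)}$ factor is absorbed into the implicit constant. The main point of tightness (rather than a genuine technical obstacle) is that the argument uses two crude bounds---$\gcd(m_d,q)^{1/2}\le q^{1/2}$ and the inclusion $\Lambda_\gamma\subset\Z^d$---together with trivial summation over $\gamma$. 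These accumulate to a loss of exactly $q^{d-1/2}$ compared to a conjectural sharp estimate, and the hypothesis $\vartheta_2 k \ge d-\tfrac{1}{2}$ is precisely the condition under which the polynomial decay $\|\cdot\|_\infty^{-k}$ absorbs this loss.
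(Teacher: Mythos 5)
Your route is essentially the paper's: bound $\widehat{F_\vecn}$ by \eqref{e:bound_hatFn}, bound the Kloosterman sum, observe that for each fixed $\gamma$ the map $\vecm\mapsto\trans\gamma B_0\vecm$ is an injection into $\Z^d\setminus\{\vecnull\}$ (the paper phrases this by summing over $\vecx=\trans\gamma B_0\vecm$ directly), apply the tail estimate \eqref{Gauss} beyond the threshold $q^{1-\frac1d+\vartheta_2}$, and sum trivially over $\gamma$. Your exponent bookkeeping, including the equivalence $k\ge\frac{2d-1}{2\vartheta_2}\iff d-1+\vartheta_2(d-k)\le-\tfrac12+d\vartheta_2$, agrees with the paper.

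The one step that does not hold as written is the final absorption. Using Weil plus $\gcd(m_d,\trans\vecn\trans\gamma\vece_d,q)\le q$ gives $|S|\le\sigma_0(q)\,q$, and your normalisations $\#\mathcal B_q\asymp q^{d-1}$, $\#\cR_q\asymp q^d$ hide a further factor $q/\varphi(q)$ (the index in \autoref{prop:Gamma0_index} is $q^{d-1}\prod_{p\mid q}\frac{1-p^{-d}}{1-p^{-1}}$, not $O(q^{d-1})$). You are therefore left with an unbounded factor of size $\sigma_0(q)\cdot q/\varphi(q)=q^{o(1)}$ multiplying $q^{d-1+\vartheta_2(d-k)}$, and this cannot be ``absorbed into the implicit constant'': the proposition has no $\epsilon$, is claimed for every integer $k\ge\frac{2d-1}{2\vartheta_2}$, and in the boundary case $\vartheta_2 k=d-\tfrac12$ --- exactly the case that matters, since the proof of \autoref{thm:main} later takes $\vartheta_2=\frac{2d-1}{2k}$ --- your exponent inequality is an equality, leaving no room to swallow $q^{o(1)}$. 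The fix is what the paper does: Weil is not needed for $\mathcal E_3$ (it is only used for $\mathcal E_2$); take the trivial bound $|S(m_d,\trans\vecn\trans\gamma\vece_d;q)|\le\varphi(q)$ and use the exact identity $\#\cR_q=\varphi(q)\,\#\mathcal B_q$, so all arithmetic factors cancel exactly and one gets the clean bound $\ll_{d,k,\vartheta_2}\|F\|_{\Cb_b^k}q^{d-1-\vartheta_2(k-d)}\le\|F\|_{\Cb_b^k}q^{-\frac12+d\vartheta_2}$. With that substitution the rest of your argument is correct and coincides with the paper's proof.
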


\begin{proof}[Proof of \autoref{thm:main}]
By \eqref{eqn:sumr2sumnm} and Propositions \ref{prop:mainterm}--\ref{prop:E3}, we have
\begin{multline}
\frac 1{\#\cR_q} \sum_{\vecr \in \cR_q} f \left(\Gamma n_+\left(\frac 1q \vecr \right) D(q), \frac 1q \vecr \right)
= \int_{\SL_d(\Z) \bsl \SL_d(\R)} \widehat{F_{\vecnull}}(A, \vecnull) \; d\mu(A) \\
+ O(\| f \|_{\Cb_b^{k}} q^{-\vartheta_1 (k-d)})
+ O(\| f \|_{\Cb_b^{k}} q^{-\frac 12 + d (\vartheta_1 + \vartheta_2) +\epsilon}),
\end{multline}
for any $\epsilon>0$,  $0<\vartheta_1<\frac{1}{2}$, $0<\vartheta_2<\frac{1}{2d}$ and  $k\geq\max\{\frac{2d-1}{2\vartheta_2}, d^2\}$.
Note that by \eqref{eqn:hat-Fn}, \eqref{eqn:Fn-def}, and \eqref{eqn:hat-f}, we have
\begin{multline}
\int_{\SL_d(\Z) \bsl \SL_d(\R)} \widehat{F_{\vecnull}}(A, \vecnull) \; d\mu(A)
= \int_{\SL_d(\Z) \bsl \SL_d(\R)}
\int_{(\R/\Z)^d} \widehat{f_0}\left(\bpm A & \vect \\ \trans \vecnull & 1\epm \right) \; d\vect  \;d\mu(A)
\\ = \int_{\Gamma \backslash \Gamma {\Hb} \times \T^d} f d \mu_{\Hb} d \vecx.
\end{multline}
Taking $\vartheta_2=\frac{2d-1}{2k}$ and $\vartheta_1=\frac{1/2-d\vartheta_2}{k}$,
 we see that $\vartheta_2 = \frac{2d-1}{2k} < \frac{1}{2k}$ and $0< \vartheta_1 = \frac{1/2-d\vartheta_2}{k}$
if and only if $k\geq 2d^2-d+1$ (which is at least $d^2$, so that \autoref{prop:mainterm} applies).
This proves \autoref{thm:main}.
\end{proof}

\subsection{The main term: effective equidistribution of Hecke points}

Recall that $\scrB_q$ is a set of representatives for $\Gamma_{0, d}(q) \bsl \SL_d(\Z)$.
Throughout this section, we let $B_0 = \sm q I_{d-1} & \\ & 1\esm$.

\begin{lemma}
We have
\begin{equation}
\SL_d(\Z) B_0 \SL_d(\Z) = \underset{\delta\in {\scrB_q}}{\dot\bigcup} \SL_d(\Z) (B_0 \delta).
\end{equation}
\end{lemma}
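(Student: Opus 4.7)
The plan is to recognise the decomposition as expressing the right cosets of $\SL_d(\Z)$ in $\SL_d(\Z) B_0 \SL_d(\Z)$, which is a generic Hecke-coset computation, and then to identify the resulting indexing set with $\Gamma_{0,d}(q)\backslash\SL_d(\Z)$ by a direct matrix conjugation. More precisely, two right cosets $\SL_d(\Z)(B_0\delta_1)$ and $\SL_d(\Z)(B_0\delta_2)$ coincide if and only if $B_0\delta_1\delta_2^{-1}B_0^{-1}\in\SL_d(\Z)$, i.e.\ $\delta_1\delta_2^{-1}\in B_0^{-1}\SL_d(\Z)B_0\cap\SL_d(\Z)$. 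So the distinct right cosets are parametrised by $\bigl(B_0^{-1}\SL_d(\Z)B_0\cap\SL_d(\Z)\bigr)\backslash\SL_d(\Z)$, and the whole claim reduces to the identity
\begin{equation}
B_0^{-1}\SL_d(\Z)B_0\cap\SL_d(\Z)=\Gamma_{0,d}(q).
\end{equation}

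First I would carry out the block computation: writing a generic $\gamma\in\SL_d(\Z)$ as $\gamma=\bpm A & \vecb\\ \trans\vecc & d'\epm$ with $A\in\M_{d-1}(\Z)$, $\vecb,\vecc\in\Z^{d-1}$, $d'\in\Z$, a direct multiplication gives
\begin{equation}
B_0\gamma B_0^{-1}=\bpm A & q\vecb\\ q^{-1}\trans\vecc & d'\epm.
\end{equation}
This matrix has integer entries if and only if $q\mid\vecc$, which is exactly the congruence condition $\gamma\equiv\bpm *& *\\ \trans\vecnull & *\epm\pmod q$ defining $\Gamma_{0,d}(q)$. Hence $\gamma\in B_0^{-1}\SL_d(\Z)B_0\cap\SL_d(\Z)$ iff $\gamma\in\Gamma_{0,d}(q)$, proving the identity above.

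Next I would combine this with the reduction of the first paragraph: as $\delta$ ranges over a complete set of representatives for $\Gamma_{0,d}(q)\backslash\SL_d(\Z)$, the cosets $\SL_d(\Z)(B_0\delta)$ are pairwise disjoint. They are all contained in the double coset $\SL_d(\Z)B_0\SL_d(\Z)$ by construction, and conversely any element of $\SL_d(\Z)B_0\SL_d(\Z)$ has the form $gB_0\delta$ with $g,\delta\in\SL_d(\Z)$, which lies in $\SL_d(\Z)(B_0\delta)$ and hence, after replacing $\delta$ by its representative in $\mathcal{B}_q$, in one of the listed cosets. This proves both disjointness and surjectivity, establishing the claimed decomposition.

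The argument is essentially a routine Hecke-coset identification and I do not anticipate any serious obstacle. The only subtlety is getting the direction of the conjugation right (left versus right cosets and $B_0$ versus $B_0^{-1}$); once that is nailed down, the whole lemma follows from a one-line block matrix multiplication.
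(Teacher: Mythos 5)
Your proof is correct and follows essentially the same route as the paper: the paper verifies disjointness and the two inclusions directly via the conjugations $B_0^{-1}\gamma B_0$ and $B_0\tau B_0^{-1}$, which is exactly your single identity $B_0^{-1}\SL_d(\Z)B_0\cap\SL_d(\Z)=\Gamma_{0,d}(q)$ split into its two inclusions. Your packaging through the standard double-coset parametrisation is just a tidier organisation of the same block computation.
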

\begin{proof}
We first check that the decomposition on the right hand side is disjoint.
For $\delta_1, \delta_2\in {\scrB_q}$,
if $\gamma B_0 \delta_1 = B_0 \delta_2$ for some $\gamma\in \SL_d(\Z)$,
then $B_0^{-1} \gamma B_0 = \delta_2 \delta_1^{-1}\in \SL_d(\Z)$.
Note that in this case
\begin{equation}
\delta_2 \delta_1^{-1}=B_0^{-1} \gamma B_0 = \bpm q^{-1} I_{d-1} & \\ & 1\epm \gamma \bpm q I_{d-1} & \\ & 1\epm
\in \Gamma_{0, d}(q).
\end{equation}
So we get $\delta_2\in \Gamma_{0, d}(q) \delta_1$.

From the construction, it is clear that
\begin{equation}
\SL_d(\Z) B_0 \SL_d(\Z) \supset \underset{\delta\in\scrB_q}{\dot\bigcup} \SL_d(\Z) (B_0\delta).
\end{equation}

Let $\tau = \sm T& \vect\\ \trans\vecs & t\esm \in \Gamma_{0, d}(q)$ for $\vecs\equiv\vecnull\pmod{q}$.

Then
\begin{equation}
B_0 \tau B_0^{-1} = \bpm q I_{d-1} & \\ & 1\epm \bpm T & \vect\\ \trans\vecs & t\epm \bpm q^{-1} I_{d-1} & \\ & 1\epm
= \bpm T & q\vect\\ q^{-1} \trans\vecs & t\epm
\in \SL_d(\Z),
\end{equation}
so $B_0 \Gamma_{0, d}(q) B_0^{-1} \subset \SL_d(\Z)$.
Take $\gamma_1, \gamma_2\in \SL_d(\Z)$.
There exists $\delta_2\in {\scrB_q}$ such that $\gamma_2\in \Gamma_{0, d}(q)\delta_2$.
We have
\begin{equation}
\gamma_1 B_0 \gamma_2 \in \gamma_1 B_0 \Gamma_{0, d}(q) \delta_2 = \gamma_1 (B_0 \Gamma_{0, d}(q) B_0^{-1}) B_0 \delta_2
\subset \SL_d(\Z) B_0 \delta_2
\end{equation}
and this implies that
\begin{equation}
\SL_d(\Z) B_0 \SL_d(\Z) \subset \underset{\delta\in \scrB_q}{\dot\bigcup} \SL_d(\Z) (B_0\delta),
\end{equation}
as claimed.
\end{proof}

Note that $\det(q^{-\frac{d-1}{d}} B_0)=1$ so $q^{-\frac{d-1}{d}} B_0\in \SL_d(\R)$.
For a function $F: \SL_d(\Z) \bsl \SL_d(\R) \to \mathbb C$, following \cite{ClozelOhUllmo2001},
the Hecke operator for $B_0$ is defined as
\begin{equation}\label{e:Heckeop}
(T_{B_0} F)(g) = \frac{1}{\#({\scrB_q})}
\sum_{\delta\in {\scrB_q}} F\left(q^{-\frac{d-1}{d}} B_0 \delta g\right).
\end{equation}
Assume that $F\in L^2(\SL_d(\Z) \bsl \SL_d(\R))$.
Following the argument in \cite[section~1]{ClozelOhUllmo2001} with \cite[Theorem~1.1]{ClozelOhUllmo2001} and the formula from \cite[p.~346]{ClozelOhUllmo2001},
we get
\begin{equation}\label{e:Hecke_L2}
\left\|T_{B_0} F - \int_{\SL_d(\Z) \bsl \SL_d(\R)} F(g) \; d\mu(g)\right\|_2
\ll_{\epsilon} q^{-\frac{1}{2}+\epsilon} \|F\|_2,
\end{equation}
for any $\epsilon>0$.
Note that the implicit constant only depends on $\epsilon$.
By \cite[Proposition~8.2]{ClozelUllmo2004}, we find that this $L^2$-convergence implies the same rate for the point-wise convergence: for every integer  $k\geq d^2$,
if $F\in \Cb_b^k(\ASL_d(\Z) \bsl \ASL_d(\R))$, then we get
\begin{equation}\label{e:Hecke_pointwise}
\left|T_{B_0}F(I_d) - \int_{\SL_d(\Z) \bsl \SL_d(\R)} F(g) \; d\mu(g) \right|
\ll_{\epsilon} q^{-\frac{1}{2}+\epsilon} \|F\|_{\Cb_b^k}.
\end{equation}

\begin{proof}[Proof of \autoref{prop:mainterm}]
Since $f$ is bounded, $\widehat{F_{\vecnull}}(*, \vecnull)\in L^2(\SL_d(\Z) \bsl \SL_d(\R))$, where the invariance under $\SL_d(\Z)$ follows from \autoref{lem:FhatSLd}.

For $S(0, 0; q) = \varphi(q)$, we get
\begin{equation}
\frac{1}{\#\cR_q} \sum_{\gamma\in {\scrB_q}}
\widehat{F_{\vecnull}}(q^{-1+\frac{1}{d}}B_0 \gamma, \vecnull)
S(0, 0;q)
= \frac{1}{\#\scrB_q}
\sum_{\gamma\in {\scrB_q}}
\widehat{F_{\vecnull}}(q^{-1+\frac{1}{d}}B_0 \gamma, \vecnull)
=T_{B_0} \widehat{F_{\vecnull}}(I_d, \vecnull).
\end{equation}
By \eqref{e:Hecke_pointwise},  for any integer $k\geq d^2$,
we get
\begin{equation}
\left|T_{B_0} \widehat{F_{\vecnull}}(I_d, \vecnull) - \int_{\SL_d(\Z) \bsl \SL_d(\R)} \widehat{F_{\vecnull}}(g, \vecnull) \; d\mu(g) \right|
\ll_{\epsilon} q^{-\frac{1}{2}+\epsilon} \|\widehat{F_{\vecnull}}(*, \vecnull)\|_{\Cb_b^k}.
\end{equation}
This completes the proof of \autoref{prop:mainterm}.
\end{proof}

\subsection{The first error term}

\begin{proof}[Proof of \autoref{prop:E1}]
Note that $\widehat{F_{\vecn}}\left(q^{-1+\frac{1}{d}} B_0 \gamma, \vecnull\right) \ll \|F\|_{\Cb_b^0}$ and
\begin{equation}
|S(0, \trans\vecn\trans\gamma \vece_d; q)|
= \left|\mu\left(\frac{q}{\gcd(q, \trans\vecn\trans\gamma \vece_d)}\right)\right|
\frac{\varphi(q)}{\varphi\left(\frac{q}{\gcd(q, \trans\vecn\trans\gamma\vece_d)}\right)}
\leq \gcd(q,\trans\vecn\trans\gamma \vece_d).
\end{equation}
The first equality holds since $S(0, \trans\vecn\trans\gamma \vece_d; q)$ is a Ramanujan sum.
Hence
\begin{equation}
\mathcal{E}_1
\ll \|F\|_{\Cb_b^0} \frac{1}{\#\cR_q} \sum_{\gamma\in \mathcal{B}_q} \gcd(q,\trans\vecn\trans\gamma \vece_d)
\ll \|F\|_{\Cb_b^0} \frac{1}{\#\cR_q} \sum_{\ell|q} \ell
\sum_{\substack{\gamma\in \mathcal{B}_q \\ \gcd(\trans\vecn\trans\gamma \vece_d, q)=\ell}}1.
\end{equation}

For each $\ell\mid q$, let
\begin{equation}
\scrS_\ell = \left\{\gamma\in \scrB_q\;:\; \gcd(\trans\vecn\trans\gamma \vece_d, q)=\ell\right\}.
\end{equation}
Then
\begin{equation}  \label{e:E1_ub}
\mathcal{E}_1 \ll \| F\|_{\Cb_b^0} \frac{1}{\#\cR_q} \sum_{\ell\mid q} \ell \#\scrS_\ell.
\end{equation}

Since $\vecn=\sm n_1\\ \vdots \\ n_d\esm \neq \vecnull$,
there exists $1\leq j_0 \leq d$ such that $n_{j_0}\neq 0$.
For $\gamma\in \scrS_\ell$, let $\veca = \trans\gamma \vece_d$ be the last row of $\gamma$.
Then
\begin{equation}
\trans\vecn(\trans\gamma \vece_d) = \trans\vecn \veca = n_1a_1+\cdots+n_da_d\equiv 0\pmod{\ell}
\end{equation}
and this implies that
\begin{equation}
n_{j_0} a_{j_0} \equiv -\sum_{1\leq j \leq d, j\neq j_0} n_j a_j\pmod{\ell}.
\end{equation}
Consequently $\gcd(n_{j_0}, \ell)\mid \sum_{1\leq j \leq d, j\neq j_0} n_j a_j$
and we get
\begin{equation}
a_{j_0}\equiv -\tilde{n}_{j_0}\frac{\sum_{1\leq j \leq d, j\neq j_0} n_j a_j}{\gcd(n_{j_0}, \ell)} \pmod{\ell/\gcd(n_{j_0}, \ell)},
\end{equation}
where $\tilde{n}_{j_0} \frac{n_{j_0}}{\gcd(n_{j_0}, \ell)}\equiv 1\pmod{\ell/\gcd(n_{j_0}, \ell)}$.
 We further note that, since $\gamma\in \SL_d(\Z)$, $\gcd(\veca)=1=\gcd(\veca, q)$.

For $\ell\mid q$, let $\ell_0= \gcd(n_{j_0}, \ell)$ and define
\begin{equation}
\scrA_{\ell_0}
= \left\{ {\veca\in (\Z/q\Z)^d \atop \gcd(\veca, q)=1} \;:\;
{\ell_0\mid \sum_{1\leq j \leq d, j\neq j_0}n_j a_j,
\atop a_{j_0}\equiv -\tilde{n}_{j_0}\frac{1}{\ell_0}\sum_{1\leq j \leq d, j\neq j_0} n_j a_j \pmod{\frac{\ell}{\ell_0}}}\right\}.
\end{equation}
By the above arguments, we deduce that for each $\gamma\in \scrS_\ell$, there exists $\veca\in \scrA_{\ell_0}$
such that $\veca \equiv \trans \gamma \vece_d \pmod{q}$.
Note that $(\Z/q\Z)^\times$ acts on the set $\scrA_{\ell_0}$ by scalar multiplication
and whenever $u\in (\Z/q\Z)^\times$, $\scrA_{\ell_0} = u\scrA_{\ell_0}$.
If $\trans\gamma \vece_d\equiv u \trans\gamma'\vece_d\pmod{q}$ for some $u\in (\Z/q\Z)^\times$,
then $\gamma'\in \Gamma_{0, d}(q) \gamma$.
So the map $\gamma \mapsto \trans \gamma \vece_d \pmod{q}$ is an injection into the set of orbits of $\scrA_{\ell_{0}}$ under the action of $(\Z/q\Z)^{\times}$ and hence we get
\begin{equation}
\# \scrS_\ell \leq \frac{1}{\varphi(q)} \#\scrA_{\ell_0}.
\end{equation}
The number of elements in $\scrA_{\ell_0}$ can, by its definition, be bounded as follows:
\begin{equation}
\#\scrA_{\ell_0} \leq q^{d-1} \frac{q}{\ell/\ell_0}.
\end{equation}
Combining both of the above inequalities,
\begin{equation} \label{e:Sl_ub}
\# \scrS_\ell \leq \frac{q^d}{\varphi(q)} \frac{\gcd(n_{j_0}, \ell)}{\ell}.
\end{equation}

For $n_{j_0}\neq 0$ and $|n_{j_0}| \leq \|\vecn\|_\infty \leq q^{\vartheta_1}$,
we have $\gcd(n_{j_0}, \ell) \leq q^{\vartheta_1}$ and inserting \eqref{e:Sl_ub} into \eqref{e:E1_ub}, we get
\begin{equation}
\scrE_1 \ll \|F\|_{\Cb_b^0} \frac{1}{\#\cR_q} \frac{q^d}{\varphi(q)} \sum_{\ell\mid q} \gcd(n_{j_0}, \ell)
\ll  \|F\|_{\Cb_b^0} \frac{1}{\#\cR_q} \frac{q^d}{\varphi(q)} q^{\vartheta_1} \sigma_0(q)
\ll_{d, \varepsilon}  \|F\|_{\Cb_b^0} q^{-1 + \vartheta_1 + \varepsilon},
\end{equation}
where we used \eqref{e:Rqlbd} for the last bound.
This completes the proof of \autoref{prop:E1}.
\end{proof}

\subsection{The second error term}

\begin{proof} [Proof of \autoref{prop:E2}]
By \eqref{e:bound_hatFn},
\begin{equation}
\mathcal{E}_2
\\ \leq
\|F\|_{\Cb_b^0}
\frac{1}{\#\cR_q}
\sum_{\gamma\in \mathcal B_q}
\sum_{\substack{\vecm\in \Z^d\setminus\{\vecnull\}, \\ \|\trans \gamma B_0 \vecm\|_\infty \leq q^{1-\frac{1}{d}+{\vartheta_2}}}}
\left|S(m_d, \trans\vecn\trans\gamma \vece_d; q)\right|.
\end{equation}
By Weil's bound for Kloosterman sums \cite{Estermann1961},
\begin{equation}
\left|S(m_d, \trans\vecn\trans\gamma \vece_d; q)\right|
\leq \sqrt{q} \gcd(m_d, \trans\vecn\trans\gamma \vece_d, q)^{\frac{1}{2}} \sigma_0(q)
\leq \sqrt{q} \gcd(m_d, q)^{\frac{1}{2}} \sigma_0(q).
\end{equation}
We thus have
\begin{equation}\label{e:errorbound_main_1}
\mathcal{E}_2
\\ \leq \|F\|_{\Cb_b^0}
\frac{1}{\#\cR_q}
\sum_{\gamma\in \mathcal B_q}
\sum_{\substack{\vecm\in \Z^d\setminus\{\vecnull\}, \\ \|\trans \gamma B_0 \vecm\|_\infty \leq q^{1-\frac{1}{d}+{\vartheta_2}}}}
\sqrt{q} \gcd(m_d, q)^{\frac{1}{2}} \sigma_0(q).
\end{equation}

For $\gamma\in \mathcal B_q$ and $\vecm\in \Z^d\setminus\{\vecnull\}$
with $\gcd(q, m_d)=\ell$,
we have
\begin{equation}
\trans\gamma B_0 \vecm = \trans\gamma \bpm qm_1\\ \vdots \\ qm_{d-1} \\ m_d\epm
=\ell \trans\gamma \bpm \frac{q}{\ell} m_1 \\ \vdots \\ \frac{q}{\ell} m_{d-1} \\ \frac{m_d}{\ell}\epm.
\end{equation}
Note that $\gcd(\frac{q}{\ell}, \frac{m_d}{\ell})=1$.
Set
\begin{equation}
\trans\gamma \frac{1}{\ell} B_0 \vecm = \vecx \in \Z^d.
\end{equation}
Since $\gamma\in \SL_d(\Z)$, $\vecx=\vecnull$ if and only if $\vecm=\vecnull$.
Assume that $\|\trans \gamma B_0 \vecm\|_\infty \leq q^{1-\frac{1}{d}+{\vartheta_2}}$.
Then $\vecx\in \Z^d\setminus\{\vecnull\}$ and $\|\vecx\|_\infty \leq \frac{q^{1-\frac{1}{d}+{\vartheta_2}}}{\ell}$.
Moreover since $\|\vecx\|_\infty <1$ if and only if $\vecx=\vecnull$,
we only consider $\ell\mid q$ such that $\frac{q^{1-\frac{1}{d}+{\vartheta_2}}}{\ell}\geq 1$.

Summarising, for each given $\vecx\in \Z^d\setminus\{\vecnull\}$ with $\|\vecx\|_\infty \leq \frac{q^{1-\frac{1}{d}+{\vartheta_2}}}{\ell}$, we count the number of $\gamma\in \mathcal B_q$ such that
$\trans\gamma \vecm = \vecx$ has an integral solution $\vecm\in \Z^d$ satisfying
$\frac{q}{\ell}\mid m_j$ for $1\leq j \leq d-1$ and $\gcd(\frac{q}{\ell}, m_d)=1$.
Moreover the solution $\vecm$ is uniquely determined since $\vecm = \trans\gamma^{-1} \vecx$.
So we can write
\begin{multline} \label{eqn:m2x}
\frac{1}{\#\cR_q}
\sum_{\gamma\in \mathcal B_q}
\sum_{\substack{\vecm\in \Z^d\setminus\{\vecnull\}, \\ \|\trans \gamma B_0 \vecm\|_\infty \leq q^{1-\frac{1}{d}+{\vartheta_2}}}}
\sqrt{q} \gcd(m_d, q)^{\frac{1}{2}} \sigma_0(q)
\\ =
\frac{1}{\#\cR_q}
\sqrt{q} \sigma_0(q)
\sum_{\substack{\ell\mid q, \\ \ell \leq q^{1-\frac{1}{d}+{\vartheta_2}}}}\ell^{\frac{1}{2}}
\sum_{\substack{\vecx\in \Z^d\setminus\{\vecnull\}, \\ \|\vecx\|_\infty \leq \frac{q^{1-\frac{1}{d}+{\vartheta_2}}}{\ell}}}
\sum_{\gamma\in \mathcal B_q} \sum_{\substack{\vecm\in \Z^d\setminus\{\vecnull\}, \\ \frac{q}{\ell}\mid m_j, 1\leq j \leq d-1, \\ \gcd(m_d, \frac{q}{\ell})=1, \\ \trans\gamma \vecm=\vecx}} 1.
\end{multline}

For $\ell\mid q$ satisfying $\ell\leq q^{1-\frac{1}{d}+{\vartheta_2}}$, and
for each $\vecx\in \Z^d \setminus\{\vecnull\}$ with $\|\vecx\|_\infty \leq q^{1-1/d+\vartheta_2} /\ell$, let
\begin{equation}
\scrS_{\ell}(\vecx)
= \left\{\gamma\in \mathcal B_q\;:\;
{\exists  \vecm\in \Z^d \textrm{ satisfies } \trans\gamma \vecm =\vecx , \atop \frac{q}{\ell}\mid m_j, \; 1\leq j \leq d-1, \; \gcd(m_d, q/\ell)=1}\right\}.
\end{equation}
Then \eqref{eqn:m2x} is equal to
\begin{equation}
\frac{1}{\#\cR_q}
\sqrt{q} \sigma_0(q)
\sum_{\substack{\ell\mid q, \\ \ell \leq q^{1-\frac{1}{d}+{\vartheta_2}}}}\ell^{\frac{1}{2}}
\sum_{\substack{\vecx\in \Z^d\setminus\{\vecnull\}, \\ \|\vecx\|_\infty \leq \frac{q^{1-\frac{1}{d}+{\vartheta_2}}}{\ell}}}
\#\scrS_\ell(\vecx).
\end{equation}

We claim that
\begin{equation}\label{e:cardinality_scrSell}
\#\scrS_\ell(\vecx)\leq
\frac{[\SL_d(\Z): \Gamma_{0, d}(q)]}{[\SL_d(\Z): \Gamma_{0, d}(q/\ell)]}
= \ell^{d-1}\prod_{p\mid \ell, p\nmid q/\ell}\frac{1-p^{-d}}{1-p^{-1}}.
\end{equation}
Indeed, for $\vecx \in \Z^d \setminus \{ \vecnull \}$,
consider $\gamma$ and $\widetilde{\gamma}$ in $\SL_d(\Z)$ such that
there exist $\vecm$ and $\vecn$ satisfying:
$\trans \gamma \vecm = \vecx$, $\trans \widetilde{\gamma} \vecn = \vecx$
with $\frac{q}{\ell} \mid m_i$ and $\frac{q}{\ell} \mid n_i$ for $1 \le i \le d-1$,
while $\gcd(\frac{q}{\ell}, m_d) = \gcd(\frac{q}{\ell}, n_d) = 1$.
It follows that
\begin{equation}
\vecm = \trans \gamma^{-1} \vecx = \trans \gamma^{-1} \trans \widetilde{\gamma} \vecn = \trans (\widetilde{\gamma} \gamma^{-1}) \vecn .
\end{equation}
Upon reducing modulo $\frac{q}{\ell}$, we get
\begin{equation}
\trans (\widetilde{\gamma} \gamma^{-1}) \bpm \vecnull \\ n_d \epm \equiv \bpm \vecnull \\ m_d \epm \pmod{\frac{q}{\ell}}
\end{equation}
with $n_d$ and $m_d$ both invertible modulo $\frac{q}{\ell}$.
This means $\widetilde{\gamma} \gamma^{-1} \in \Gamma_{0,d}(q/\ell)$.
Now fix one matrix $\gamma\in\SL_d(\Z)$ satisfying the condition in the definition of $\mathcal{S}_\ell(\vecx)$
(if no such $\gamma$ exists, then $\# \scrS_\ell(\vecx)=0$ and the claim is proved).
Every matrix $\widetilde \gamma\in \scrS_\ell(\vecx)$
is then of the form $\widetilde{\gamma} = \delta \gamma$
for some $\delta\in \Gamma_{0,d}(q/\ell)$.
Hence  $\# \scrS_\ell(\vecx)$ is bounded by the number of distinct $\Gamma_{0,d}(q)$-cosets of the form $\Gamma_{0,d}(q)\delta \gamma$ with $\delta\in \Gamma_{0,d}(q/\ell)$.
One can check that for any $\delta,\delta'\in \Gamma_{0,d}(q/\ell)$, we have $\Gamma_{0,d}(q)\delta \gamma = \Gamma_{0,d}(q) \delta' \gamma$ if and only if $\delta' \delta^{-1} \in \Gamma_{0,d}(q)$.
Therefore
\begin{equation}
  \# \scrS_\ell(\vecx) \leq \#(\Gamma_{0,d}(q) \backslash \Gamma_{0,d}(q/\ell)) = [\Gamma_{0,d}(q/\ell) : \Gamma_{0,d}(q)],
\end{equation}
which is precisely the inequality in \eqref{e:cardinality_scrSell}.
The equality follows from \autoref{prop:Gamma0_index}.

Then we have
\begin{align}
\frac{1}{\#\cR_q}
\sqrt{q} \sigma_0(q) &
\sum_{\ell\mid q}\ell^{\frac{1}{2}}
\sum_{\substack{\vecx\in \Z^d\setminus\{\vecnull\}, \\ \|\vecx\|_\infty \leq \frac{q^{1-\frac{1}{d}+{\vartheta_2}}}{\ell}}}
\#\scrS_\ell(\vecx)
\\ &\leq
\frac{1}{\#\cR_q}
\sqrt{q} \sigma_0(q)
\sum_{\substack{\ell\mid q, \\ \ell \leq q^{1-\frac{1}{d}+{\vartheta_2}}}}\ell^{\frac{1}{2}}
\sum_{\substack{\vecx\in \Z^d\setminus\{\vecnull\}, \\ \|\vecx\|_\infty \leq \frac{q^{1-\frac{1}{d}+{\vartheta_2}}}{\ell}}} \ell^{d-1} \prod_{p\mid \ell, p\nmid q/\ell} \frac{1-p^{-d}}{1-p^{-1}}
\nonumber \\
& \ll
\frac{1}{\#\cR_q}
\sqrt{q} \sigma_0(q)\prod_{p\mid q}\frac{1-p^{-d}}{1-p^{-1}}
\sum_{\substack{\ell\mid q, \\ \ell \leq q^{1-\frac{1}{d}+{\vartheta_2}}}}\ell^{\frac{1}{2}}
\left(\frac{q^{1-\frac{1}{d}+{\vartheta_2}}}{\ell}\right)^d\ell^{d-1}
\nonumber \\ & = \frac{1}{\#\cR_q}
q^{d-\frac{1}{2}+{\vartheta_2} d} \sigma_0(q)\prod_{p\mid q}\frac{1-p^{-d}}{1-p^{-1}}
\sum_{\substack{\ell\mid q, \\ \ell \leq q^{1-\frac{1}{d}+{\vartheta_2}}}}
\ell^{-\frac{1}{2}}
\nonumber \\ & \leq \frac{1}{\#\cR_q}
q^{d-\frac{1}{2}+{\vartheta_2} d} \sigma_0(q)^2 \prod_{p\mid q}\frac{1-p^{-d}}{1-p^{-1}}.
\nonumber
\end{align}

Using \eqref{e:RqEuler}, we get
\begin{equation}
\frac{1}{\#\cR_q}
\sqrt{q} \sigma_0(q)
\sum_{\ell\mid q}\ell^{\frac{1}{2}}
\sum_{\substack{\vecx\in \Z^d\setminus\{\vecnull\}, \\ \|\vecx\|_\infty \leq \frac{q^{1-\frac{1}{d}+{\vartheta_2}}}{\ell}}}
\#\scrS_\ell(\vecx)
\leq
q^{-\frac{1}{2}+{\vartheta_2} d}
\frac{\sigma_{0}(q)^2}{\prod_{p\mid q}(1-p^{-1})}.
\end{equation}
This proves \autoref{prop:E2}.
\end{proof}

\subsection{The third error term}


\begin{proof}[Proof of \autoref{prop:E3}]
By \eqref{e:bound_hatFn}, for any integer $k\geq0$, we have
\begin{equation}
\mathcal{E}_3
\\ \leq
\frac{1}{\#\cR_q}
\sum_{\gamma\in \mathcal B_q
}
\sum_{\substack{\vecm\in \Z^d\setminus\{\vecnull\}, \\ \|\trans \gamma B_0 \vecm\|_\infty > q^{1-\frac{1}{d}+{\vartheta_2}}}} \frac{\|F\|_{\Cb_b^k}\left|S(m_d, \trans\vecn(\trans\gamma \vece_d); q)\right|}{(2\pi \|q^{-1+\frac{1}{d}} \trans\gamma B_0 \vecm\|_{\infty})^k}
.
\end{equation}
By the trivial bound for the Kloosterman sum
$
\left|S(m_d, \trans\vecn(\trans\gamma \vece_d); q)\right|
\leq \varphi(q),
$
we have
\begin{equation}
\mathcal{E}_3
\leq
\|F\|_{\Cb_b^k} \frac{\varphi(q)}{(2\pi)^k}
\frac{1}{\#\cR_q}
\sum_{\gamma\in \mathcal B_q}
\sum_{\substack{\vecm\in \Z^d\setminus\{\vecnull\}, \\ \|\trans \gamma B_0 \vecm\|_\infty > q^{1-\frac{1}{d}+{\vartheta_2}}}} \frac{q^{k(1-\frac{1}{d})}}{\|\trans\gamma B_0 \vecm\|_{\infty}^k}.
\end{equation}

For each $\gamma\in \mathcal B_q$
and $\vecm\in \Z^d\setminus\{\vecnull\}$, let $\trans \gamma B_0 \vecm = \vecx$, then we have
\begin{multline}\label{eqn:E3<<Sum}
\scrE_3
\leq \|F\|_{\C_b^k} \frac{1}{(2\pi)^k}
\frac{\varphi(q) \#{\scrB_q}}{\#\cR_q}
\sum_{\substack{\vecx\in \Z^d\setminus\{\vecnull\}, \\ \|\vecx\|_\infty > q^{1-\frac{1}{d}+{\vartheta_2}}}}
\frac{q^{k(1-\frac{1}{d})}}{\|\vecx\|_\infty^{k-d-{\vartheta_2}}} \frac{1}{\|\vecx\|_\infty^{d+{\vartheta_2}}}
\\
\leq \|F\|_{\C_b^k} \frac{1}{(2\pi)^k}
q^{d-1-{\vartheta_2}(k-d-1+\frac{1}{d}-{\vartheta_2})}
\sum_{\substack{\vecx\in \Z^d\setminus\{\vecnull\}, \\ \|\vecx\|_\infty > q^{1-\frac{1}{d}+{\vartheta_2}}}}
\frac{1}{\|\vecx\|_\infty^{d+{\vartheta_2}}}.
\end{multline}
By the same argument used to obtain \eqref{Gauss}, we have
\begin{equation}
\sum_{\substack{\vecx\in \Z^d\setminus\{\vecnull\}, \\ \|\vecx\|_\infty > q^{1-\frac{1}{d}+{\vartheta_2}}}}
\frac{1}{\|\vecx\|_\infty^{d+{\vartheta_2}}} \ll_{d,{\vartheta_2}}
(q^{1-\frac{1}{d}+{\vartheta_2}})^{-{\vartheta_2}}.
\end{equation}
Thus, by \eqref{eqn:E3<<Sum}, we have
\[
  \scrE_3
  \ll_{d,{\vartheta_2},k}  \|F\|_{\C_b^k}
  q^{d-1-{\vartheta_2}(k-d)}
  \ll_{d,{\vartheta_2},k}  \|F\|_{\C_b^k}
  q^{-\frac{1}{2}+{\vartheta_2} d},
\]
provided that $k\geq \frac{2d-1}{2{\vartheta_2}}$.
This proves \autoref{prop:E3}.
\end{proof}

\section{An application: diameters of random circulant graphs} \label{sec:app}
In this section, we denote by $X$ the space of unimodular lattices in $\R^d$.

We abuse notations and still denote by $\Cb_b^k(X)$ the space of $k$-times continuously differentiable functions $f$ from $X$ to $\R$ such that for every left-invariant differential operator $D$ on $\SL_d(\R)$ of order at most $k$, $\| D f \|_{\infty}$ is finite.
Likewise, for a function $f \in \Cb_b^k(X)$, we still denote by $\| f \|_{\Cb_b^k}$ the obvious analogue of \eqref{e:FCbk}.

Define, for $q \ge 2$ and $d \ge 2$, the $(d+1)$-dimensional lattice $\Lambda_q = \Z^d \times q \Z$.
For $\veca \in (\Z \cap [1, q])^d$ with $\gcd(\veca, q) = 1$ (meaning $\veca \in \cR_q$),
define \begin{equation} n(\veca) = \bpm I_d & \veca \\ \trans \vecnull & 1 \epm \in \SL_{d+1}(\Z). \end{equation}
Consider $\Lambda_q(\veca)_0 = \Lambda_q n(\veca) \cap (\R^d \times \{0\})$.
Finally define $D_q = q^{-1/d} I_d \in \GL_d(\R)$, so that $\det(D_q) = q^{-1}$.
Consider the $d$-dimensional lattice $L_{q, \veca} = \Lambda_q(\veca)_0 D_q$.
Following the steps used to prove \cite[Theorem 3]{MarklofStrombergsson2013}, with \autoref{thm:main} replacing the use of \cite[Theorem 4]{MarklofStrombergsson2013}, we see that \autoref{thm:main} implies:

\begin{theorem} \label{thm:efflatteq}
For every $d \ge 3$, every $\epsilon>0$ and every function $f \in \Cb_b^{k}(X)$, with an integer  $k \geq 2d^2-d+1$, we have
\begin{equation}
\frac 1{\#\cR_q} \sum_{\veca \in \cR_q} f(L_{q, \veca}) = \int_X f d\mu + O(\|f\|_{\Cb_b^{k}}  q^{-\frac 12 + \frac{d^2(2k-2d+1)}{2k^2}+\epsilon}).
\end{equation}
\end{theorem}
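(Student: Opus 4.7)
The plan is to deduce Theorem \ref{thm:efflatteq} from Theorem \ref{thm:main} by lifting $f \in \Cb_b^k(X)$ to a function $\tilde f$ on $\Gamma \backslash \Gamma H \times \T^d$ that depends only on the underlying $d$-dimensional lattice of a given point of $\Gamma \backslash \Gamma H$. The first ingredient I would set up is the natural projection $\pi \colon \Gamma \backslash \Gamma H \to X$ induced by the group morphism $H = \SL_d(\R) \ltimes \R^d \to \SL_d(\R)$ that forgets the translation: concretely, $\Gamma \begin{pmatrix} A & \vecx \\ \trans \vecnull & 1\end{pmatrix} \mapsto \SL_d(\Z) A$. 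This is a smooth submersion with $\T^d$-fibres and satisfies $\pi_*\mu_H = \mu$.

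The heart of the reduction is the geometric identification $\pi(\Gamma n_+(\veca/q) D(q)) = L_{q, \veca}$ for every $\veca \in \cR_q$ (possibly composed with the measure-preserving duality involution $L \mapsto L^*$, depending on matrix conventions). To verify it, I would write $n_+(\veca/q) D(q) = \gamma h$ with $\gamma \in \SL_{d+1}(\Z)$ and $h \in H$; a short calculation then forces the top-left block of $\gamma$ to be an integer matrix $M$ with $\det M = q$ and $\trans\veca M \equiv \vecnull \pmod q$, and an index comparison identifies $\Z^d A = q^{1/d}\Z^d M^{-1}$ with $q^{-1/d}\{\vecm \in \Z^d \colon \vecm \cdot \veca \equiv 0 \pmod q\} = L_{q, \veca}$ (up to duality). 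This computation is precisely the one carried out in \cite[proof of Theorem 3]{MarklofStrombergsson2013}.

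Once the identification is in hand, I would set $\tilde f(\Gamma h, \vecx) := f(\pi(\Gamma h))$ (composing with duality if needed --- duality is a measure-preserving diffeomorphism of $X$ and only changes Sobolev norms by a factor depending on $d$ and $k$). Since $\tilde f$ depends neither on $\vecx$ nor on the $\R^d$-translation part of $h$, formula \eqref{e:Ei0} yields $X_{i_0}\tilde f = 0$ and the $\partial_{x_i}$ derivatives vanish, so only $Y_{i,j}$-derivatives contribute to \eqref{e:fCbk}. These pull back to left-invariant derivatives of $f$ on $\SL_d(\R)$, giving $\|\tilde f\|_{\Cb_b^k} \ll_{d,k} \|f\|_{\Cb_b^k}$. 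Applying Theorem \ref{thm:main} to $\tilde f$, noting that the left-hand side becomes $\frac{1}{\#\cR_q}\sum_{\veca \in \cR_q} f(L_{q, \veca})$ and that $\int_{\Gamma \backslash \Gamma H \times \T^d} \tilde f\, d\mu_H\, d\vecx = \int_X f\, d\mu$ by $\pi_*\mu_H = \mu$, then yields the desired estimate. The only genuinely delicate point is the lattice identification, which is conceptually straightforward but sensitive to conventions for $n_+$ versus $n$, left versus right actions, and transposes; beyond that, the deduction is a routine functorial consequence of Theorem \ref{thm:main}.
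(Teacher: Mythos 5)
Your proposal is correct and follows essentially the same route as the paper, which simply invokes the steps of Marklof--Str\"ombergsson's proof of their Theorem 3 with \autoref{thm:main} replacing their Theorem 4: lift $f$ through the fibration $\Gamma\backslash\Gamma H \to X$, identify $\pi(\Gamma n_+(\veca/q)D(q))$ with $L_{q,\veca}$, and apply \autoref{thm:main} to the pulled-back (torus- and translation-independent) test function with the Sobolev-norm comparison you indicate. Your caveat about the duality involution is exactly the right one: with the row-vector convention and the paper's parametrisation (Lemma \ref{lem:emss_parameter}), the projected lattice is $q^{-(d-1)/d}(\Z\veca+q\Z^d)$, which is precisely $L_{q,\veca}^*$, and since $L\mapsto L^*$ preserves $\mu$ and Sobolev norms up to constants this changes nothing in the estimate.
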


\begin{remark}
When $d=2$, a version of this theorem follows from \cite[Theorem 1.3]{LeeMarklof2017} (see also \autoref{rem:d=2}).
\end{remark}

As explained in the introduction, we can use this theorem to deduce the following rate of convergence for the limiting distribution of the appropriately rescaled diameters of random circulant graphs.

To help understand what follows, we briefly summarise the key steps in the relevant parts of Marklof and Str\"ombergsson's paper \cite{MarklofStrombergsson2013}.
The first step (see \cite[section 2.2]{MarklofStrombergsson2013} for more details) is to identify the circulant graph $C_q(\veca)$ --- that is, the Cayley graph of $\Z/q\Z$ with respect to the $a_i$ --- with a lattice graph on a torus:
\begin{enumerate}
\item consider the graph $LG_d$ whose vertices are the points of the lattice $\Z^d$ and whose edges are of the form $(\veck, \veck + \vece_i)$ for some $\veck \in \Z^d$, where $(\vece_1, \ldots, \vece_d)$ is the canonical basis of $\R^d$;
\item introduce a metric $m$ on $LG_d$ by defining the distance between two vertices $\veck$ and $\vecl$ in $\Z^d$ to be $m(\veck, \vecl) = \sum_{i=1}^d |k_i - l_i|$
\item extend this metric in the obvious way to a metric on $\Z^d/\Lambda$ where $\Lambda$ is a sublattice of $\Z^d$;
\item \cite[Lemma 2]{MarklofStrombergsson2013} is the assertion that $LG_d/\Lambda_q(\veca)_0$ and $C_q(\veca)$ are isomorphic as metric graphs.
\end{enumerate}
The next step is to relate the diameter of $LG_d/\Lambda_q(\veca)_0$ --- which, by the first step, is exactly the diameter $\diam(q,d)$ we are interested in --- to the diameter of $\R^d/L_{q,\veca}$ (where the distance on the torus is the $\ell^1$ distance):
\cite[Proposition 1]{MarklofStrombergsson2013} asserts that
\begin{equation}
q^{1/d} \diam(\R^d/L_{q,\veca}) - \frac d2 \le \diam(LG_d/\Lambda_q(\veca)_0) \le q^{1/d} \diam(\R^d/L_{q,\veca}).
\end{equation}
The final step (\cite[Lemma 4]{MarklofStrombergsson2013}) connects the diameter of a torus $\R^d/L$ to the covering radius of the $d$-orthoplex with respect to the lattice $L \subset \R^d$:
\begin{equation}
\diam(\R^d/L) = \rho(\fP, L).
\end{equation}
We recall that the latter quantity is defined to be
\begin{equation}
\rho(\fP, L) = \inf \{r > 0 \, : \, r \fP + L = \R^d \}
\end{equation}
and that for $d \ge 2$, the $d$-orthoplex is the polytope
\begin{equation} \fP = \{ \vecx \in \R^d \, : \, \| \vecx \|_1 \le 1 \}. \end{equation}

We can now state the consequence of \autoref{thm:efflatteq} pertaining to the diameters of random circulant graphs.

\begin{corollary} \label{cor:app_err}
For every $d \ge 3$, there exists a continuous non-increasing function $\Psi_d \colon \R_{\ge 0} \to \R_{\ge 0}$
with $\Psi_d(0)=1$ and $\lim_{R \to \infty} \Psi_d(R) = 0$
such that for every $\epsilon > 0$ and every $R \ge 0$, we have
\begin{equation}
\Prob\left(\frac{\diam(q,d)}{q^{1/d}} \ge R \right) = \Psi_d(R) + O_{R,\epsilon} \left(q^{-\eta_d+\epsilon} \right),
\end{equation}
where $\eta_d = \dfrac {2d^2 - 2d + 1}{2 (2d^2 - d + 1)^2 (2d^2 - d + 2)}$.
Moreover, for $R \ge 0$, $\Psi_d$ is explicitly given by
\begin{equation}
\Psi_d(R) = \mu(\{ L \in X \, : \, \rho(\fP, L) \ge R \})
\end{equation}
where $\mu$ is the Haar probability measure on $X$.
\end{corollary}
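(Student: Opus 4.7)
The plan is to convert the diameter statement into a covering-radius statement on $X$, apply \autoref{thm:efflatteq} to a suitably smoothed indicator function, and then optimize the smoothing scale. Combining the three identifications recalled in the text immediately yields the sandwich
\beq
\rho(\fP, L_{q,\veca}) - \frac{d}{2 q^{1/d}} \le \frac{\diam(q,d)}{q^{1/d}} \le \rho(\fP, L_{q,\veca}),
\eeq
so it suffices to estimate the proportion of $\veca \in \cR_q$ for which $\rho(\fP, L_{q,\veca}) \ge R$, with a margin of size $O(q^{-1/d})$ in $R$ to be absorbed later.

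To apply \autoref{thm:efflatteq}, we smooth the indicator of $A_R := \{L \in X : \rho(\fP,L) \ge R\}$. Fix a non-negative $\phi \in C_c^\infty(\SL_d(\R))$ supported in a small neighborhood of the identity with $\int \phi\,dg = 1$, rescale it to $\phi_\delta$ with support in a $\delta$-ball, and set
\beq
f^{\pm}_{R,\delta}(L) = \int_G \mathbf{1}_{A_{R \mp c_0 \delta}}(Lg) \, \phi_\delta(g) \, dg
\eeq
for an absolute constant $c_0 > 0$ chosen so that $f^-_{R,\delta} \le \mathbf{1}_{A_R} \le f^+_{R,\delta}$. Differentiating under the integral gives $\|f^\pm_{R,\delta}\|_{\Cb_b^k} \ll_k \delta^{-k}$; right-invariance of $\mu$ under $\SL_d(\R)$ turns $\int_X (f^+_{R,\delta} - f^-_{R,\delta})\,d\mu$ into the measure of the thin shell $\{L : |\rho(\fP,L) - R| \le c_0 \delta\}$, which is $\ll_R \delta$ by a shell-volume estimate; the same estimate gives local Lipschitz continuity of $\Psi_d$.

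Applying \autoref{thm:efflatteq} with $k = 2d^2 - d + 1$ to each of $f^\pm_{R,\delta}$ and using the algebraic identity
\beq
\frac{d^2(2k-2d+1)}{2k^2} = \frac{1}{2} - \alpha, \qquad \alpha := \frac{2d^2 - 2d + 1}{2(2d^2 - d + 1)^2},
\eeq
(verified by direct expansion using $2k-2d+1 = 4d^2-4d+3$) produces an equidistribution error of size $O_\epsilon(\delta^{-k} q^{-\alpha + \epsilon})$, to be combined with $O(\delta)$ from the smoothing and $O(q^{-1/d})$ from the sandwich (the latter multiplied by the local Lipschitz constant of $\Psi_d$). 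Balancing via $\delta = q^{-\alpha/(k+1)}$ yields total error $O_{R,\epsilon}(q^{-\eta_d + \epsilon})$ with
\beq
\eta_d = \frac{\alpha}{k+1} = \frac{2d^2 - 2d + 1}{2(2d^2 - d + 1)^2 (2d^2 - d + 2)},
\eeq
as desired; the $O(q^{-1/d})$ contribution is dominated since $\eta_d < 1/d$ for all $d \ge 3$. The main technical obstacle is justifying the shell-volume estimate $\mu(\{L \in X : |\rho(\fP, L) - R| \le c\delta\}) \ll_R \delta$ on the non-compact space $X$: one must bound the contribution from the cusps separately (using the rapid decay of cusp-neighborhood volumes together with the fact that $\rho(\fP,\cdot)$ blows up in the cusps, so that for bounded $R$ the relevant shell stays away from them), while on the compact part the estimate follows from the Lipschitz dependence of $\rho(\fP,\cdot)$ on $L$ in the natural metric.
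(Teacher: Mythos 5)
Your overall architecture is the same as the paper's: sandwich $\chi_{S_R}$ (with $S_R=\{L\in X:\rho(\fP,L)\ge R\}$) between smooth functions of $\Cb_b^k$-norm $\ll\delta^{-k}$ and $L^1$-distance $\ll\delta$, apply \autoref{thm:efflatteq} with $k=2d^2-d+1$, and balance $\delta$ against the equidistribution error; your exponent bookkeeping is correct and reproduces $\eta_d=\alpha/(k+1)$ exactly as in the paper, and your explicit treatment of the $d/2$ offset between $\diam(q,d)/q^{1/d}$ and $\rho(\fP,L_{q,\veca})$ is a welcome addition that the paper leaves implicit.

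However, there is a genuine gap at the step you yourself flag as the main technical obstacle. Your approximants $f^\pm_{R,\delta}$ are built by shifting the radius parameter, so the quantity you must bound by $O_R(\delta)$ is the measure of the $\rho$-shell $\{L\in X:|\rho(\fP,L)-R|\le c_0\delta\}$, i.e.\ precisely $\Psi_d(R-c_0\delta)-\Psi_d(R+c_0\delta)$: you are assuming local Lipschitz continuity of $\Psi_d$, which is stronger than the continuity known from Marklof--Str\"ombergsson and is not established by your sketch. Lipschitz dependence of $L\mapsto\rho(\fP,L)$ on the compact part goes the wrong way: it bounds how much $\rho$ can change under a small move of $L$ (so it shows a metric neighbourhood of the level set is \emph{contained in} a shell), but it gives no upper bound on the measure of the set where $\rho$ is nearly equal to $R$; for that you need either a transversality/co-Lipschitz property of $\rho$ along its level sets, or the route the paper takes. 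The paper avoids the shell estimate altogether: it invokes the Str\"ombergsson--Venkatesh approximation lemma (\autoref{lem:SVapprox}), whose construction perturbs $S_R$ in the \emph{metric} on $X$ rather than in the radius, so the $L^1$-error is controlled by the measure of a metric $\delta$-neighbourhood of $\partial S_R$, and this is $\ll_R\delta$ because $S_R$ has thin boundary --- a fact imported from the proof of Lemma~7 in Marklof--Str\"ombergsson, which exhibits $\partial S_R$ inside finitely many smooth submanifolds of codimension at least~$1$. To repair your argument, either replace the radius-shifted sets $A_{R\mp c_0\delta}$ by metric erosions/dilations of $S_R$ and quote the thin-boundary property (which is essentially the paper's proof), or supply an actual proof of the shell-volume estimate; note that the same unproved Lipschitz input also underlies your absorption of the $O(q^{-1/d})$ offset, so the gap propagates there as well.
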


It should be clear from the discussion preceding the above corollary that its proof requires an approximation argument to pass from the smooth functions in \autoref{thm:efflatteq} to characteristic functions.
We borrow the following definition from Li's paper (\cite[Definition 1.3]{Li2015}):
\begin{definition} \label{def:thinbd} A subset of $X$ is said to have \emph{thin boundary} if its boundary is contained in the union of finitely many connected smooth submanifolds of $X$, all of which have codimension at least $1$.
\end{definition}

We also borrow (in a slightly modified form) the following technical lemma from a paper by Str\"ombergsson and Venkatesh (\cite[Lemma 1]{StrombergssonVenkatesh2005}). For a set $S \subset X$, we denote by $\chi_S \colon X \to \{0, 1\}$ its characteristic function.
\begin{lemma} \label{lem:SVapprox}
If $S \subset X$ has thin boundary, then for each $\delta \in (0,1)$, there exist functions $f_{-}$ and $f_{+}$ in $C^\infty(X)$ such that for every $k \ge 1$,
\begin{enumerate}
\item $0 \le f_{-} \le \chi_S \le f_{+} \le 1$;
\item $\|f_{-}\|_{\Cb_b^k} \ll_S \delta^{-k}$ and $\|f_{+}\|_{\Cb_b^k} \ll_S \delta^{-k}$;
\item $\| f_{-} - \chi_S \|_{L^1} \ll_S \delta$ and $\| f_{+} - \chi_S \|_{L^1} \ll_S \delta$.
\end{enumerate}
\end{lemma}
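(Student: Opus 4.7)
The plan is to construct $f_\pm$ as convolutions of characteristic functions of slight outer and inner $\delta$-thickenings of $S$ against a smooth approximate identity of width $\delta$. Properties (1), (2), and (3) will then follow respectively from the support properties of the mollifier, Young's inequality for convolutions, and the thin boundary hypothesis.

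First I would fix once and for all a non-negative bump function $\phi\in C_c^\infty(\SL_d(\R))$ supported in a small fixed neighbourhood of the identity with $\int_{\SL_d(\R)} \phi\,dg = 1$. Rescaling in exponential coordinates produces, for each $\delta\in (0,1)$, a function $\phi_\delta$ satisfying $\int \phi_\delta\,dg = 1$, $\supp \phi_\delta \subset B(\id,\delta/4)$ (for a fixed right-invariant Riemannian metric on $\SL_d(\R)$), and $\|Y^\alpha \phi_\delta\|_{L^1(\SL_d(\R))}\ll_\alpha \delta^{-|\alpha|}$ for every multi-index $\alpha$ in a basis of left-invariant vector fields on $\SL_d(\R)$. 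Writing $S^+ := \{x\in X : \dist(x,S)<\delta/4\}$ and $S^- := \{x\in X : B(x,\delta/4)\subset S\}$, I would define
\beq
f_{\pm}(x) := \int_{\SL_d(\R)} \chi_{S^{\pm}}(xg^{-1})\,\phi_\delta(g)\,dg.
\eeq

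Property (1) follows from a direct inspection of the supports: if $x\in S$ then every $y$ with $\dist(x,y)<\delta/4$ lies in $S^+$, so $f_+(x)=\int\phi_\delta\,dg=1$, while if $x\notin S$ the symmetric argument gives $f_-(x)=0$; the sandwich $0\le f_-\le \chi_S\le f_+\le 1$ follows. Property (2) is obtained by pulling each left-invariant differential operator $Y^\alpha$ onto the mollifier via the identity $Y^\alpha f_{\pm} = \chi_{S^{\pm}} * (Y^\alpha \phi_\delta)$ (a standard change of variables using the left-invariance of Haar measure on $\SL_d(\R)$), and then applying Young's inequality:
\beq
\|Y^\alpha f_\pm\|_{L^\infty(X)} \le \|\chi_{S^{\pm}}\|_{L^\infty}\,\|Y^\alpha \phi_\delta\|_{L^1} \ll_\alpha \delta^{-|\alpha|}.
\eeq
Summing over $|\alpha|\le k$ yields $\|f_\pm\|_{\Cb_b^k}\ll_{k,S}\delta^{-k}$.

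The main obstacle is property (3), where the thin boundary hypothesis enters crucially. By construction both $f_+ - \chi_S$ and $\chi_S - f_-$ are supported in the tube $N_\delta := \{x\in X : \dist(x,\partial S)\le \delta\}$, so it suffices to show that $\mu(N_\delta)\ll_S \delta$. Since $\partial S\subset \bigcup_{i=1}^N M_i$ for finitely many smooth submanifolds $M_i\subset X$ of codimension at least $1$, covering each $M_i$ by finitely many Riemannian normal-coordinate charts reduces this to the Euclidean estimate that a $\delta$-tube around a smooth codimension-$\ge 1$ submanifold of $\R^{d^2-1}$ has Lebesgue measure bounded by $\delta$ times its local surface area, which follows from Fubini (or the coarea formula) applied in normal coordinates. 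The non-compactness of $X$ is harmless: both $S$ and the enveloping submanifolds $M_i$ are fixed in advance, so the required finite cover and the resulting constant depend only on $S$, as allowed.
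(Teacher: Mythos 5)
The paper does not actually prove this lemma: it is quoted (in slightly modified form) from Str\"ombergsson--Venkatesh, and your mollification argument is essentially the standard proof behind that citation --- convolve the indicators of inner and outer $\delta$-thickenings of $S$ with an approximate identity on the group, move derivatives onto the mollifier (legitimate here since $\SL_d(\R)$ is unimodular and the $\Cb_b^k$-norm is built from left-invariant operators), and control the $L^1$ error by the measure of a $\delta$-tube around $\partial S$. Two small repairs are needed: the Riemannian metric should be taken \emph{left}-invariant on $\SL_d(\R)$, not right-invariant, since otherwise it neither descends to $\SL_d(\Z)\backslash\SL_d(\R)$ nor gives the bound $\dist(x,xg^{-1})\ll\dist(e,g)$ that your support arguments for properties (1) and (3) rely on; and your concluding finite cover by normal-coordinate charts tacitly assumes the submanifolds containing $\partial S$ (or at least $\partial S$ itself) are compact, which is not guaranteed by \autoref{def:thinbd} as stated but does hold for the sets $S_R$ in the application, since $\{L\in X : \rho(\fP,L)\le R\}$ is compact by Mahler's criterion, so the tube estimate $\mu(N_\delta)\ll_S\delta$ and hence the lemma go through where they are used.
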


We can finally proceed with the proof of \autoref{cor:app_err}.
\begin{proof}[Proof of \autoref{cor:app_err}]
Define, for $R \ge 0$, the following subset of $d$-dimensional unimodular lattices
\begin{equation}
S_R = \{ L \in X \, : \, \rho(\fP, L) \ge R \}
\end{equation}
where $\rho$ is the covering radius and $\fP$ is the $d$-orthoplex.

In order to deduce \autoref{cor:app_err}, we wish to apply \autoref{thm:efflatteq} to $\chi_{S_R}$ for each $R \ge 0$.
To do so, we make use of \autoref{lem:SVapprox} to approximate this characteristic function by smooth functions.
For this, we first need to show that, for each $R \ge 0$, the set $S_R$ has thin boundary according to \autoref{def:thinbd}.
However, this follows from the proof of \cite[Lemma 7]{MarklofStrombergsson2013}.
We therefore find smooth functions $f_{-}$ and $f_{+}$ as in \autoref{lem:SVapprox}.
Applying \autoref{cor:app_err} to each of those and using their properties, we conclude that for every $\delta \in (0,1)$, every $\epsilon > 0$ and every $k \ge 2d^2-d+1$,
\begin{equation}
\frac 1{\#\cR_q} \sum_{\veca \in \cR_q} \chi_{S_R}(L_{q, \veca}) = \int_X \chi_{S_R} d\mu + O_R(\delta + \delta^{-k} q^{-\frac 12 + \vartheta +\epsilon})
\end{equation}
with $\vartheta=\frac{d^2(2k-2d+1)}{2k^2}$.
If we now choose $\delta = q^{\frac {-1/2 + \vartheta}{k+1}}$, we get that for every $\epsilon > 0$,
\begin{equation}
\frac 1{\#\cR_q} \sum_{\veca \in \cR_q} \chi_{S_R}(L_{q, \veca}) = \int_X \chi_{S_R} d\mu + O_R(q^{\kappa_{d,k} +\epsilon})
\end{equation}
with $\kappa_{d,k} = \frac {-k^2+d^2(2k-2d+1)}{2k^2(k+1)}$.
Finally, picking $k = 2d^2-d+1$ we get the desired error term with
$\eta_d \sim \frac 1{8d^4}$ as claimed.
\end{proof}

\thispagestyle{empty}
{\footnotesize
\nocite{*}
\bibliographystyle{amsalpha}
\bibliography{bibliography}
}

\end{document}